\documentclass[11pt]{article}

\usepackage{amsmath}
\usepackage{amssymb}
\usepackage{amsthm}
\usepackage{adjustbox}
\usepackage{color}
\usepackage{verbatim}
\usepackage{epsfig}
\usepackage{latexsym}
\usepackage{array}
\usepackage{graphicx}
\usepackage{hyperref}

\usepackage{a4wide}

\usepackage{authblk}

\DeclareMathOperator{\diag}{diag}

\DeclareMathOperator{\rank}{rank}
\DeclareMathOperator{\lcm}{lcm}

\def\mxl{\left[ \begin{array}}  \def\mxr{\end{array} \right]}
\def\detl{\left| \begin{array}}  \def\detr{\end{array} \right|}
\def\bmx{\mxl{cccc}} \def\emx{\mxr}

\newcommand{\mxx}[2]{\left[ \begin{array}{#1}#2\end{array}\right]} % has 2 commands- can't use \def

\def\bc{\begin{center}}          \def\ec{\end{center}}
\def\ben{\begin{enumerate}}      \def\een{\end{enumerate}}
\def\beq{\begin{equation}}       \def\eequ{\end{equation}}
\def\beqn{\begin{eqnarray*}} \def\eeqn{\end{eqnarray*}} %Pedro
\def\eqs{\vspace{-10pt}\begin{equation}} \def\eqe{\vspace{-5pt}\end{equation}}
\def\bal{\begin{align}}           \def\eal{\end{align}}
\def\ben{\begin{enumerate}}       \def\een{\end{enumerate}}
\def\bit{\begin{itemize}}           \def\eit{\end{itemize}}
\def\btabb{\begin{tabbing}}         \def\etabb{\end{tabbing}}
\def\btab{\begin{tabular}}          \def\etab{\end{tabular}}
\def\barr{\begin{array}}           \def\earr{\end{array}}
\def\earrb{\end{array} \right\}}

\def\xx{\times}

\def\lb{\lambda}

\def\ga{\alpha}

    \def\io{^{-1}}

\def\it{\emph}  
\def\bf{\textbf}  %\def\bf{\mathbf} \def\bo{\textbf}

 \def\x{\bf{x}}  \def\y{\bf{y}}  
\def\z{\bf{z}} %\def\c{\bf{c}} 

    \def\w{\bf{w}}
\def\0{\bf{0}}    

\font\Bbb=msbm10 %blackboard-bold
\def\A{\ifmmode{\mbox{\Bbb A}}\else{{\Bbb A}}\fi}
\def\B{\ifmmode{\mbox{\Bbb B}}\else{{\Bbb B}}\fi}
\def\R{\ifmmode{\mbox{\Bbb R}}\else{{\Bbb R}}\fi}
\def\C{\ifmmode{\mbox{\Bbb C}}\else{{\Bbb C}}\fi}

\def\F{\mathbb{F}}
\def\G{\ifmmode{\mbox{\Bbb G}}\else{{\Bbb G}}\fi}
\def\N{\ifmmode{\mbox{\Bbb N}}\else{{\Bbb N}}\fi}
\def\Z{\ifmmode{\mbox{\Bbb Z}}\else{{\Bbb Z}}\fi}
\def\K{\ifmmode{\mbox{\Bbb K}}\else{{\Bbb K}}\fi}
\def\L{\ifmmode{\mbox{\Bbb L}}\else{{\Bbb L}}\fi}
\def\D{\ifmmode{\mbox{\Bbb D}}\else{{\Bbb D}}\fi}
\def\T{\ifmmode{\mbox{\Bbb T}}\else{{\Bbb T}}\fi}
\def\I{\ifmmode{\mbox{\Bbb I}}\else{{\Bbb I}}\fi}
\def\E{\ifmmode{\mbox{\Bbb E}}\else{{\Bbb E}}\fi}
\def\K{\ifmmode{\mbox{\Bbb K}}\else{{\Bbb K}}\fi}
\def\Q{\ifmmode{\mbox{\Bbb Q}}\else{{\Bbb Q}}\fi}
\def\P{\ifmmode{\mbox{\Bbb P}}\else{{\Bbb P}}\fi}

\def\cG{\ifmmode{\mathcal{G}}\else{{$\mathcal{G}$}}\fi} 
\def\cA{\ifmmode{\mathcal{A}}\else{{$\mathcal{A}$}}\fi}

\def\op{ \ifmmode{\oplus} \else{\leavevmode\hbox{$\oplus$}\fi } }
\def\ot{\ifmmode{\otimes}\else{$\!\!\otimes$}\fi}
\def\od{ \ifmmode{\odot} \else{\leavevmode\hbox{$\odot$}\fi } }
\def\adots{\mathinner{\raise 1pt\hbox{.}\mkern1mu\raise4 pt\hbox{.}\mkern2mu
  \mkern1mu\raise7 pt\vbox{\kern7 pt\hbox{.}}\mkern2mu}}

\def\bdefn{\begin{defn}} \def\edefn{\end{defn}}
\def\bex{\begin{ex}} \def\eex{\end{ex}}
\def\beg{\begin{eg}} \def\eeg{\end{eg}}
\def\blem{\begin{lem}} \def\elem{\end{lem}}
\def\bth{\begin{thm}} \def\eth{\end{thm}}
\def\bprop{\begin{prop}} \def\eprop{\end{prop}}
\def\bcor{\begin{cor}} \def\ecor{\end{cor}}

\newtheorem{thm}{Theorem}[section]  
\newtheorem{lem}{Lemma}[section]
\newtheorem{cor}{Corollary}[section]  \newtheorem{prop}{Proposition}[section]
\newtheorem{eg}{Example}[section]    
\newtheorem{ex}{Exercise}[section]
\newtheorem{defn}{Definition}[section]

\def\ball{\begin{align}}  \def\eall{\end{align}}

\def\bbin{\left( \begin{array}{c}} \def\ebin{\end{array} \right)} %Pedro

\title{On the  index of an anti-triangular block matrix} 

\author[1]{Faustino Maciala\thanks{fausmacialamath@hotmail.com}}
\author[2]{Xavier Mary\thanks{xavier.mary@parisnanterre.fr}}
\author[3]{C. Mendes Ara\'ujo\thanks{clmendes@math.uminho.pt}}
\author[3]{Pedro Patr\'{\i}cio \thanks{pedro@math.uminho.pt}} 

\affil[1]{CMAT -- Centre of Mathematics, Universidade do Minho, 4710-057 Braga, Portugal. Departamento de Ci\^encias da Natureza e Ci\^encias Exatas do Instituto Superior de Ci\^encias da Educa\c c\~ao de Cabinda, Angola.}

\affil[2]{ Laboratoire Modal'X, Universit\'e Paris Nanterre, 200 avenue de la r\'epublique, 92000 Nanterre, France. }

\affil[3]{CMAT -- Centre of Mathematics and Department of Mathematics, Universidade do Minho, 4710-057 Braga, Portugal}

\date{ }
\begin{document}
\maketitle

%{\footnotesize  ORCID: 0000-0002-6466-4426  \,\,   ORCID: 0000-0002-3579-0749 \, \,  ORCID: 0000-0001-9776-0919}}

\begin{abstract}
The  index of a matrix is a fundamental invariant in the analysis of singular matrices and their generalized inverses.
While sharp results are available for block triangular matrices, the corresponding theory for anti-triangular block matrices is less developed. In this paper, we study matrices of the form
\[
M=\left[\begin{array}{cc} A & B \\ C & 0 \end{array}\right],
\]
under algebraic constraints on the blocks.

Building on additive decompositions involving von Neumann inverses, we relate the  index of $M$ to invariance properties of the index and minimal polynomial of expressions of the form $A^{2}A^{-}+I-AA^{-}$, where $A^-$ is any von Neumann inverse of $A$. This connection provides an effective mechanism to control the index of $M$ through suitable factorizations and associated block products.

As a consequence, we derive explicit lower and upper bounds for the index of  $M$ in terms of the index of  $A$ and of $BC$, and characterize situations in which these bounds are attained. Under additional annihilation or orthogonality conditions on the blocks, we obtain closed-form representations for the Drazin inverse of $M$. Applications to adjacency matrices of directed graphs illustrate the sharpness of the bounds and the applicability of the results to structured matrices arising in graph-theoretic settings.
\end{abstract}

%\begin{abstract}
%As far as the authors' knowledge goes, there is no closed general formula for the Drazin inverse of a anti-triangular block matrix, that is, a block matrix with zero $(2,2)$ block. In this paper, we address the index of these block matrices with constraints on the blocks. Finally, we consider matrices related to some types of (weighted) digraphs.
%\end{abstract}

\

\noindent\textbf{Keywords:} Index of a matrix; Drazin inverse;  von Neumann inverse; anti-triangular block matrices; graph matrices

\medskip
\noindent\textbf{Mathematics Subject Classification (2020):} 15A09, 15A10, 05C20, 05C50

\section{Introduction}

The theory of generalized inverses of matrices has been a cornerstone of linear algebra for several decades, with applications ranging from differential equations and control theory to Markov chains. Among these inverses, the Drazin inverse occupies a particularly prominent role in the analysis of singular or non-diagonalizable matrices, especially those arising from dynamical systems and stochastic processes. First introduced by M.P. Drazin in 1958 in semigroups  \cite{Drazin1958}, the Drazin inverse $A^D$ of a square matrix $A$ over an arbitrary field is the unique matrix satisfying
$$
A^{k+1}A^D = A^k, \quad A^DAA^D = A^D, \quad \text{and} \quad AA^D = A^DA,
$$
where $k$ is the  Drazin index of $A$, denoted by $i(A)$.  The Drazin index equals the index of the matrix; that is, if $\psi_A(\lambda)$ denotes the minimal polynomial of the matrix $A$, then $\psi_A(\lambda) = \lambda^{i(A)} f(\lambda)$, where $f(0) \neq 0$. It is known that the index of $A$ is the smallest  nonnegative integer $k$ for which $\operatorname{rank}(A^k) = \operatorname{rank}(A^{k+1})$. Equivalently, it is the smallest  nonnegative integer $k$ for which $\ker (A^{k}) = \ker(A^{k+1})$, alternatively  $R(A^k)=R(A^{k+1})$, where $\ker(\cdot)$ and $R(\cdot)$ denote, respectively,  the kernel and the range of a matrix. The group inverse of a matrix $A$, denoted by $A^\#$, is a special case of the Drazin inverse whose index is at most $1$.

    The standard notation $A\{1\}$ is used for the set of von Neumann inverses of $A$, that is, the set of solutions to the matrix equation $AXA=A$. A particular von Neumann inverse will be denoted by $A^-$.  We may need to consider different choices of von Neumann inverses of $A$, and for that purpose we denote by $A^=\in A\{1\}$.  A reflexive inverse of $A$, denoted by $A^+$, is a common solution to the matrix equations $AXA=A, XAX=X$. It is easy to check that $A^- A A^=$ serves as a reflexive inverse of $A$, for any choice of $A^-, A^=\in A\{1\}$.  
For further definitions and results concerning generalized inverses of matrices, the reader is referred to \cite{Adi, CampbellMeyer}.

 As usual, the vector space of $m\xx n$ matrices of a general field $\F$ is denoted by $\F^{m\xx n}$.

An important topic in the algebraic theory of generalized invertibility, namely von Neumann, group and Drazin inverses, is to provide a closed formula for these inverses for block matrices.  In recent years, 
considerable progress has been achieved in representing the Drazin inverse of block matrices and block 
operator matrices. The extant literature contains several recent references examining Drazin invertibility of an anti-triangular matrix, such as \cite{Bu2012, Bu2011, Nieves, CatralDrazin, Dopazo, Liu, Drazin220, Zhao, Zhang2020, Zhang2019}.   However, relatively little attention has been given to the explicit characterization 
of the Drazin index. It is worth noting that for a large square matrix $A$, determining the Drazin index 
$i(A)$ in terms of $\operatorname{rank}(A^k)$ ($k \in \mathbb{N}$) can be quite challenging, 
as these ranks are often difficult to compute. Consequently, various techniques involving partitioned 
matrices are commonly employed to address this issue. In particular, in their seminal paper, and independently, Hartwig and Shoaf \cite{HartwigShoaf} and Meyer and Rose \cite{Meyer} addressed block triangular matrices, and in particular showed that if $M$ is a block triangular matrix with diagonal blocks $A$ and $B$, then $\max \{i(A), i(B)\}\le i(M)\le i(A)+i(B)$. This was later addressed by Bru et al. \cite{Bru} by characterizing $M$ for which its index takes values in between the lower and upper bound, and revisited by Xu et al. \cite{Xu} in the computation of the explicit Drazin indices of certain $2 \times 2$ operator matrices.

The foundation of the technique for studying the problem essentially rests upon some form of additive matrix decomposition, featuring some type of one-sided orthogonality, which at some point allows for the application of Cline's lemma. However, the repeated application of this technique does not allow for effective control over the index of the matrix, since new inequalities arise at each step where Cline's lemma  is applied. Although we also apply Cline's lemma at an early stage, our approach uses other techniques that allow us to associate the matrix with another one with a lower  index.

This work starts with the presentation of a series of preparatory results, we then relate the %Drazin
 index and the minimal polynomial of some special sums, we study the index of an anti-triangular block matrix subject to block constraints, and we conclude with some applications to matrices associated to certain types of digraphs.
 
\section{Lemmata}  

In this section we collect a number of auxiliary results which will be used in the upcoming sections.

\begin{lem}\label{indexAB}
 Given matrices $A$ and $B$ such that both products $AB$ and $BA$ are defined, one of the following identities holds:
\[
\psi_{AB}(\lambda)=\psi_{BA}(\lambda), \qquad
\psi_{AB}(\lambda)=\lambda\,\psi_{BA}(\lambda), \qquad
\text{or} \qquad
\psi_{BA}(\lambda)=\lambda\,\psi_{AB}(\lambda).
\]
 %   Given matrices $A$ and $B$ of conformal sizes, we have {\color{blue} either  $ \psi_{AB}(\lb)= \psi_{BA}(\lb)$ or $ \psi_{AB}(\lb)=\lb \,\psi_{BA}(\lb)$ or  $\psi_{BA}(\lb)=\lb \,\psi_{AB}(\lb)$. }
%    \[
 %  \psi_{AB}(\lb)=\lb^{0,\pm 1}\,\psi_{BA}(\lb).
%    \]
\end{lem}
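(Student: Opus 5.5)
The plan is to compare the two minimal polynomials through the intertwining identities $A(BA)^k=(AB)^kA$ and $(BA)^kB=B(AB)^k$. Concretely, I will show that $\psi_{AB}$ divides $\lambda\,\psi_{BA}$ and, symmetrically, that $\psi_{BA}$ divides $\lambda\,\psi_{AB}$. Then I will use the degree/monicity bookkeeping to pin down which of the three cases occurs.

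\textbf{Divisibility.} For any polynomial $p$, the intertwining identities give
\[
(AB)\,p(AB)=A\,p(BA)\,B .
\]
Taking $p=\psi_{BA}$ yields $\lambda\,\psi_{BA}(\lambda)$ annihilating $AB$, so $\psi_{AB}\mid\lambda\psi_{BA}$. By the symmetric argument, $\psi_{BA}\mid\lambda\psi_{AB}$.

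\textbf{Cases.} Write $\psi_{AB}=\lambda^{a}f$ and $\psi_{BA}=\lambda^{b}g$ with $f(0)\neq0\neq g(0)$.

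\begin{itemize}
\item From $\psi_{AB}\mid\lambda\psi_{BA}$ we get $f\mid g$ and $a\le b+1$.
\item From $\psi_{BA}\mid\lambda\psi_{AB}$ we get $g\mid f$ and $b\le a+1$.
\end{itemize}

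Both polynomials are monic, so $f=g$ and $|a-b|\le1$. Hence $a=b$, $a=b+1$, or $b=a+1$, which are exactly the three identities in the statement.

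\textbf{Main obstacle.} The only delicate point is the step where, for instance, $a\le b+1$ is combined with the nonzero-root parts. I have to make sure that divisibility of $\lambda^{a}f$ into $\lambda^{b+1}g$ really splits into $f\mid g$ and $a\le b+1$. This holds because $\gcd(\lambda,f)=1$, so $\lambda^{a}$ and $f$ are coprime factors that divide $\lambda^{b+1}g$ separately.

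A minor caveat concerns the degenerate case where $AB$ or $BA$ is a $0\times0$ (empty) matrix. This cannot happen for genuine matrices, so no extra care is needed. Also, the field being arbitrary causes no issue, since only polynomial identities over $\F$ are used.
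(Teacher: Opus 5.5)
Your proof is correct and follows the paper's argument: both derive $\psi_{AB}\mid\lambda\psi_{BA}$ and $\psi_{BA}\mid\lambda\psi_{AB}$ from $(AB)^{k+1}=A(BA)^kB$. Your factorization $\psi_{AB}=\lambda^a f$, $\psi_{BA}=\lambda^b g$ makes explicit the step the paper compresses into ``The result follows''; note that $f\mid g$ uses $\gcd(f,\lambda)=1$, while $a\le b+1$ uses $g(0)\neq 0$.
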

\proof 
From $(AB)^{n+1}=A(BA)^n B$ it follows that
\[
B\,\psi_{AB}(AB)\, A \;=\; BA\,\psi_{AB}(BA),
\]
hence $\psi_{BA} (\lb)\mid \lb\,\psi_{AB}(\lb)$. 
Similarly, $\psi_{AB} (\lb)\mid \lb\,\psi_{BA}(\lb)$.   The result follows.
%Therefore $\psi_{AB}(\lb)=\lb^{0,\pm 1}\,\psi_{BA}(\lb)$.
\endproof

\noindent\emph{Example.}
Consider the matrices over a field:
\[
A= \left[\begin{array}{rr|rr}
0 & 1 & 0 & 0 \\
0 & 0 & 0 & 0 \\
\hline
0 & 0 & 0 & 1 \\
0 & 0 & 0 & 0
\end{array}\right],\quad
B=\left[\begin{array}{rrr|r}
0 & 1 & 0 & 0 \\
0 & 0 & 1 & 0 \\
0 & 0 & 0 & 0 \\
\hline
0 & 0 & 0 & 1
\end{array}\right],\quad
C= \left[\begin{array}{rrr|r}
0 & 1 & 0 & 0 \\
0 & 0 & 1 & 0 \\
0 & 0 & 0 & 0 \\
\hline
0 & 0 & 0 & 0
\end{array}\right].
\]
Direct computation shows that $BA$, $AC$ and $CA$ are nilpotent of index $2$, whereas $AB$ is nilpotent of index $3$. 
Consequently,
\[
\lb^2 = \psi_{BA}(\lb)= \psi_{AC}(\lb)=\psi_{CA}(\lb)=\lb^{-1}\psi_{AB}(\lb)=\lb^{-1}\cdot \lb^3.
\]

\begin{lem}[Cline's Lemma]\label{Cline}
    Given matrices $A$ and $B$ of conformal sizes, we have
    \[
    (AB)^D = A((BA)^D)^2 B, \quad \text{and} \quad |\,i(AB)-i(BA)\,|\le 1.
    \]
\end{lem}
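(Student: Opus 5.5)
The index inequality $|\,i(AB)-i(BA)\,|\le 1$ follows at once from Lemma \ref{indexAB}. By that lemma, $\psi_{AB}(\lb)$ and $\psi_{BA}(\lb)$ differ by a factor of at most one power of $\lb$. Write $\psi_{AB}(\lb)=\lb^{i(AB)}f(\lb)$ and $\psi_{BA}(\lb)=\lb^{i(BA)}g(\lb)$ with $f(0)g(0)\neq 0$. The three alternatives of Lemma \ref{indexAB} then force $f=g$ and $i(AB)-i(BA)\in\{0,1,-1\}$. So I would dispose of the second claim first.

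For the formula, I will set $k=i(BA)$ and $X=A((BA)^D)^2B$, and verify the three defining equations of the Drazin inverse for $AB$. I will use $(BA)^D$ commuting with $BA$, $(BA)^D BA (BA)^D=(BA)^D$, and $(BA)^{k+1}(BA)^D=(BA)^k$. The identities needed in order are as follows.

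First, commutativity. We have $ABX=AB\,A((BA)^D)^2B=A\,(BA)((BA)^D)^2B=A(BA)^DB$. Also $XAB=A((BA)^D)^2BA\,B=A(BA)^DB$. Hence $ABX=XAB=A(BA)^DB$.

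Second, $XABX=X$. This follows from $A(BA)^DB\cdot A((BA)^D)^2B=A(BA)^D(BA)((BA)^D)^2B=A((BA)^D)^2B$.

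Third, $(AB)^{m+1}X=(AB)^m$ for some $m$; I take $m=k+1$. We compute $(AB)^{k+2}X=(AB)^{k+1}A(BA)^DB=A(BA)^{k+1}(BA)^DB=A(BA)^kB=(AB)^{k+1}$.

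Uniqueness of the Drazin inverse then gives $(AB)^D=X$. Note that the index in the definition only needs to be some exponent; any exponent at least $i(AB)$ works, since the equation propagates upward by multiplying by $AB$. Alternatively, the exponent $k+1$ is consistent with $i(AB)\le i(BA)+1$ from the first part.

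The main obstacle, if any, is this last step: making sure the power equation with exponent $k+1$ rather than $i(AB)$ suffices to characterize the Drazin inverse. I would justify it by noting the following. Any $X$ that commutes with $AB$, satisfies $XABX=X$, and satisfies $(AB)^{m+1}X=(AB)^m$ for some $m$ must equal the unique Drazin inverse. The standard uniqueness proof only uses some such $m$. Everything else is routine manipulation with $A(BA)^n=(AB)^nA$.
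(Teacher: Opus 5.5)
Your proof is correct. The index bound is obtained exactly as in the paper, by comparing the powers of $\lambda$ in $\psi_{AB}$ and $\psi_{BA}$ via Lemma \ref{indexAB}. For the formula, the paper only cites Campbell--Meyer (Theorem 7.8.4), whereas you verify the three Drazin equations for $A((BA)^D)^2B$ directly. Using the exponent $i(BA)+1$ there is legitimate, since uniqueness holds for any admissible exponent, and your verification makes the lemma self-contained.
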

\proof See \cite[Theorem 7.8.4]{CampbellMeyer}. The index inequality follows from Lemma \ref{indexAB}. \endproof

\begin{lem} 
 Given matrices $A$ and $B$ of conformal sizes, if $AB=BA$ then
\[
AB^D=B^DA \quad \text{and} \quad A^D B^D=B^D A^D.
\]
\end{lem}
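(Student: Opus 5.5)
The plan is to use the classical fact that the Drazin inverse of a square matrix is a polynomial in that matrix. Commutation with $B$ then passes to $B^D$, and applying the same fact a second time gives $A^DB^D=B^DA^D$.

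First I will recall or establish that for any square $B$ over $\F$ there is a polynomial $p$ with $B^D=p(B)$. Write the minimal polynomial as $\psi_B(\lambda)=\lambda^{k}f(\lambda)$ with $f(0)\neq 0$ and $k=i(B)$. Since $\lambda^k$ and $f$ are coprime, Bézout gives polynomials $u,v$ with $u\lambda^k+vf=1$. Then $E=u(B)B^k$ is the spectral idempotent: it is a polynomial in $B$ and $I-E=v(B)f(B)$. On the range of $E$ (the core part), $B$ is invertible, and its inverse there is again a polynomial in $B$, obtained by inverting $\lambda$ modulo $f(\lambda)$ using $\gcd(\lambda,f)=1$. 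Composing, $B^D=q(B)E$ for a suitable polynomial $q$. One checks the three defining equations, or simply invokes uniqueness of the Drazin inverse, to conclude $B^D=p(B)$ for a polynomial $p$. Alternatively, I can cite the standard result \cite[Theorem 7.5.1]{CampbellMeyer}, that $B^D$ is a polynomial in $B$; this is the cleanest route.

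Next, if $AB=BA$, then $A$ commutes with every power of $B$, hence with $p(B)=B^D$, which gives $AB^D=B^DA$. For the second identity, apply the first one with the roles swapped. From $AB^D=B^DA$, the matrices $B^D$ and $A$ commute, so $B^D$ commutes with $A^D=r(A)$ for the polynomial $r$ associated with $A$. Hence $A^DB^D=B^DA^D$.

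The only real content is the polynomial representation of the Drazin inverse. If I do not simply cite it, the main obstacle is verifying that the Bézout construction actually satisfies $B^{k+1}B^D=B^k$, using $f(B)B^k(\ldots)=0$. That verification is routine once the splitting $\F^n=R(B^k)\oplus\ker(B^k)$ is used.
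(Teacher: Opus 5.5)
Your argument is correct. The paper gives no proof of its own and simply cites \cite[Theorem 7.8.4]{CampbellMeyer}; your route through the polynomial representation $B^D=p(B)$, which your B\'ezout construction establishes over an arbitrary field $\F$ as the paper requires, is the standard argument behind that citation. (One minor point of phrasing: uniqueness of the Drazin inverse is applied \emph{after} checking the three defining equations for $q(B)E$, not as an alternative to checking them.)
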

\proof See \cite[Theorem 7.8.4]{CampbellMeyer}. \endproof

\begin{lem}\label{Apower}
    Given any von Neumann inverse $A^-$ of a square matrix $A$ and a positive integer $\ell$,
    \[
    (A^2A^-)^\ell = A^{\ell +1}A^-.
    \]
\end{lem}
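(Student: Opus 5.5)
Since the statement concerns all positive integers $\ell$, I will argue by induction on $\ell$. The only property of $A^-$ that enters is the defining identity $AA^-A=A$, used once at each step to absorb the factor $A^-A$ that appears between consecutive powers of $A$.

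For the base case $\ell=1$ the identity reads $A^2A^-=A^{2}A^-$, so there is nothing to prove. For the inductive step, assume $(A^2A^-)^\ell = A^{\ell+1}A^-$ for some $\ell\ge 1$. Then
\[
(A^2A^-)^{\ell+1} = (A^2A^-)^{\ell}\,(A^2A^-) = A^{\ell+1}A^-\,A^2A^- = A^{\ell}\,(AA^-A)\,AA^- .
\]
Using $AA^-A=A$, the right-hand side becomes $A^{\ell}\,A\,AA^- = A^{\ell+2}A^-$, which is the claimed formula for $\ell+1$.

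The argument has no real obstacle. The only point needing care is the regrouping of $A^{\ell+1}A^-A^2$ as $A^{\ell}(AA^-A)A$. This exposes the pattern $AA^-A$, which requires $\ell\ge 1$ so that at least one factor of $A$ stands to the left of $A^-$; this holds because $\ell$ is a positive integer. It also shows that the result is independent of the particular choice of $A^-\in A\{1\}$.
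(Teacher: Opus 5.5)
Your proof is correct and essentially the same as the paper's: both argue by induction on $\ell$ and use $AA^-A=A$ once per step. The only difference is that the paper writes $(A^2A^-)^{\ell+1}=A^2A^-(A^2A^-)^{\ell}$ and absorbs $A^-A$ on the left, whereas you split off the factor on the right. One small point: the regrouping $A^{\ell+1}A^-A^2=A^{\ell}(AA^-A)A$ already works for $\ell\ge 0$, so your remark that it requires $\ell\ge 1$ is overly cautious, though harmless.
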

\proof
The proof proceeds by induction. For $\ell=1$ the claim is immediate. 
Assuming it holds for $\ell$, we compute
\[
(A^2A^-)^{\ell +1} =A^2A^- (A^2A^-)^{\ell} 
= A^2A^- A^{\ell +1}A^- =A^{\ell +2}A^-,
\]
which establishes the result.
\endproof

\begin{lem}\label{Npower}
    $N^{k+1}=0\ne N^{k}$ if and only if $(N^2N^-)^k =0 \ne (N^2 N^-)^{k-1}$ for one (and hence all) choices of von Neumann inverse $N^-$ of a square matrix $N$.
\end{lem}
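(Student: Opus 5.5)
The plan is to reduce everything to Lemma \ref{Apower}, which gives $(N^2N^-)^\ell = N^{\ell+1}N^-$ for every positive integer $\ell$ and every choice $N^-\in N\{1\}$. The statement then turns into a comparison of the vanishing of $N^{\ell+1}N^-$ with that of $N^{\ell+1}$, shifted by one in the exponent.

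The first step is the key equivalence: for every $\ell\ge 1$,
\[
N^{\ell+1}N^- = 0 \iff N^{\ell}=0 .
\]
If $N^\ell=0$, then clearly $N^{\ell+1}N^-=0$. Conversely, if $N^{\ell+1}N^-=0$, multiply on the right by $N$ and use $NN^-N=N$ to get
\[
0=N^{\ell+1}N^-N=N^{\ell}(NN^-N)=N^{\ell}N=N^{\ell+1}.
\]
This only yields $N^{\ell+1}=0$, which is one power too many. The sharper claim is false: take $N\neq 0$ with $N^2=0$ and $\ell=1$. Then $N^2N^-=0$ but $N^1\neq 0$. So the correct equivalence, and the one that matches the exponents in the statement, is
\[
N^{\ell+1}N^-=0 \iff N^{\ell+1}=0 .
\]
The forward direction is the computation above. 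The backward direction is immediate, since $N^{\ell+1}N^-$ then has a zero factor on the left. Combined with Lemma \ref{Apower}, this gives $(N^2N^-)^\ell=0 \iff N^{\ell+1}=0$ for all $\ell\ge1$, independently of the choice of $N^-$.

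Next, apply this with $\ell=k$ and $\ell=k-1$. The condition $N^{k+1}=0$ is equivalent to $(N^2N^-)^k=0$. The condition $N^k\ne 0$ is equivalent to $(N^2N^-)^{k-1}\neq0$, provided $k-1\ge1$. Both equivalences hold for every von Neumann inverse $N^-$. This settles the phrase ``for one (and hence all)'': the truth value of the right-hand side does not depend on $N^-$, because it coincides with the $N^-$-free left-hand side.

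The only delicate point is the boundary case $k=1$, or $k=0$ if it is admitted. For $k=1$, the exponent $k-1=0$ and Lemma \ref{Apower} does not apply. Here $(N^2N^-)^0=I$ is always nonzero, so the right-hand side reduces to $N^2N^-=0$, that is, $N^2=0$. The left-hand side reads $N^2=0\ne N$. These agree when $N\ne0$. For the zero matrix, the convention $N^0=I$ makes the statement consistent once the case is excluded or interpreted accordingly. I expect this bookkeeping at the smallest exponents to be the only real obstacle; the core is the one-line cancellation $N^{\ell+1}N^-N=N^{\ell+1}$.
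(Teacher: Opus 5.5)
Your proof is correct and is essentially the paper's own argument: both use Lemma \ref{Apower} to rewrite $(N^2N^-)^\ell$ as $N^{\ell+1}N^-$, and both use right multiplication by $N$ to get $N^{\ell+1}N^-=0 \iff N^{\ell+1}=0$, applied at $\ell=k$ and $\ell=k-1$. In a final write-up, just delete the retracted claim $N^{\ell+1}N^-=0\iff N^{\ell}=0$.

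Your boundary remark is accurate and sharper than the paper. The paper's proof tacitly assumes $k\ge 2$ when it identifies $N^kN^-$ with $(N^2N^-)^{k-1}$. For $N=0$ and $k=1$ the left-hand side fails while the right-hand side ($N^2N^-=0\ne I$) holds. So that case must genuinely be excluded rather than ``interpreted''. This matches the paper treating $N=0$ separately in the later minimal-polynomial theorem.
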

\proof 
For the `if' part, observe that $(N^2N^-)^k=0$ is equivalent to $N^{k+1}N^- = N^{k+1}=0$. 
If $N^k=0$ then $N^kN^-=(N^2N^-)^{k-1}=0$, a contradiction.  

Conversely, $N^{k+1}=0$ is equivalent to $N^{k+1}N^-=0$, which implies $(N^2N^-)^k=0$. 
If $(N^2N^-)^{k-1}=0$, then $N^k=0$, again a contradiction. 
\endproof

\begin{lem}\label{orthsum} Given matrices $X$ and $Y$ of conformal sizes such that  $X+Y$ is singular, and  $XY=YX=0$ then
    \[
    (X+Y)^D=X^D+Y^D 
%    \quad \text{and} \quad i(X+Y)=\max\{i(X),i(Y)\}.
    \]
Furthermore,
\ben
\item If $i(X)\ne i(Y)$ or $\min\{i(X), i(Y)\}=1$ then $i(X+Y)=\max\{i(X),i(Y)\}$;
\item If $i(X)=i(Y)\ne 1$ then $i(X)-1\le i(X+Y) \le i(X)$.
\een
\end{lem}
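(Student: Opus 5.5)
The plan is to handle the Drazin inverse formula by direct verification, and then read off the index bounds from the identity $(X+Y)^n=X^n+Y^n$ for $n\ge 1$. This identity holds because $XY=YX=0$ kills every mixed word in the expansion.

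\emph{Drazin inverse.} First I record the orthogonality relations involving $X^D$ and $Y^D$. Using $X^D=(X^D)^2X=X(X^D)^2$, and the same for $Y$, we get
\[
X^DY=(X^D)^2XY=0,\qquad YX^D=YX(X^D)^2=0,\qquad X^DY^D=(X^D)^2XY(Y^D)^2=0,
\]
and symmetrically $XY^D=Y^DX=Y^DX^D=0$. Put $Z=X^D+Y^D$. Then $(X+Y)Z=XX^D+YY^D=Z(X+Y)$, and
\[
Z(X+Y)Z=X^DXX^D+Y^DYY^D=Z.
\]
Let $m=\max\{i(X),i(Y)\}$, and take $m\ge1$ without loss of generality. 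Then $(X+Y)^{m+1}Z=X^{m+1}X^D+Y^{m+1}Y^D=X^m+Y^m=(X+Y)^m$. Uniqueness of the Drazin inverse gives $(X+Y)^D=X^D+Y^D$.

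\emph{Index.} For a square matrix $A$ and $k\ge1$, $i(A)\le k$ if and only if $A^k=A^{k+1}A^D$. Set $E_X(k)=X^k-X^{k+1}X^D$, define $E_Y(k)$ likewise, and let
\[
S_k=(X+Y)^k-(X+Y)^{k+1}(X+Y)^D .
\]
By the first part, $S_k=E_X(k)+E_Y(k)$ for $k\ge1$. Singularity of $X+Y$ gives $i(X+Y)\ge1$, so it suffices to find the least $k\ge1$ with $S_k=0$.

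\emph{Separating the two summands.} This is the key step. Multiplying on the left by $X$ kills $E_Y(k)$, since $XY^j=0$ for $j\ge1$, and it sends $E_X(k)$ to $E_X(k+1)$. Hence $S_k=0$ implies $E_X(k+1)=0$, and symmetrically $E_Y(k+1)=0$. Therefore $i(X+Y)\ge m-1$. Conversely, $S_m=0$, so $i(X+Y)\le m$. This already proves item~2.

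\emph{Item 1.} Suppose $i(X)>i(Y)$, and put $k=i(X)-1\ge i(Y)$. If $k\ge1$, then $E_Y(k)=0$ while $E_X(k)\neq0$, so $S_k\ne0$ and $i(X+Y)=i(X)$. If $k=0$, then $i(X)=1$, and $i(X+Y)\ge1$ forces equality. If instead $i(X)=i(Y)=1$, the bounds give $0\le i(X+Y)\le1$, and singularity forces $i(X+Y)=1$.

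\emph{Expected obstacle.} The only delicate point is the edge bookkeeping: index $0$, where an invertible $X$ forces $Y=0$, and the restriction to $k\ge1$ in the characterization of the index. Singularity of $X+Y$ is exactly what rules out the problematic case $i(X+Y)=0$.
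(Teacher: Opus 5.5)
Your proof is correct and follows essentially the same route as the paper: you verify the three Drazin equations for $X^D+Y^D$, get $i(X+Y)\le\max\{i(X),i(Y)\}$ from $(X+Y)^n=X^n+Y^n$, obtain the lower bound by left-multiplying by $X$ to kill the $Y$-part (the paper phrases this with kernels, showing $\ker X^{r+2}\subseteq\ker X^{r+1}$ via $Xv\in\ker(X+Y)^{r+1}$, while you write it as $XS_k=E_X(k+1)$), and settle item 1 by noting that at level $i(X)-1$ the $Y$-residual vanishes and the $X$-residual does not. Your residual bookkeeping is a slightly tidier packaging of the same argument, and your treatment of the index-$0$ and $k\ge 1$ edge cases is sound.
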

 
 \proof  The proposed expression satisfies the three defining equations of the Drazin inverse. Hence, by uniqueness, $(X+Y)^D = X^D+Y^D$.

Set $r=i(X+Y)$ and $k=\max\{i(X),i(Y)\}$.

Since $XY=YX=0$, we have $(X+Y)^\ell =X^\ell + Y^\ell$ for every integer $\ell\geq 1$. Moreover, the relations $X^DY=YX^D=Y^DX=XY^D=0$ yield  $(X+Y)^k(X^D+Y^D)(X+Y)=(X+Y)^k$. Therefore, $r\le k$. 
 
Furthermore, for every integer $m\geq 0$,
$$
X(X+Y)^m=X^{m+1}
\quad \text{and} \quad
Y(X+Y)^m=Y^{m+1}.
$$
Notice also that both $X$ and $Y$ commute with $X+Y$.

Let $v\in\ker X^{r+2}$. Since $X$ commutes with $X+Y$, it follows that
$$
(X+Y)^{r+1}(Xv)=X(X+Y)^{r+1}v= X^{r+2}v=0.$$
Thus, $Xv\in\ker (X+Y)^{r+1}$. Since $i(X+Y)=r$, we have $\ker (X+Y)^{r+1}=\ker (X+Y)^r$.
Consequently, $(X+Y)^r(Xv)=0$. Using the identities above once again, we obtain
$$
X^{r+1}v=0.
$$
Hence,
$$
\ker X^{r+2}\subseteq\ker X^{r+1}.
$$
The reverse inclusion is immediate, and therefore
$$
\ker X^{r+2}=\ker X^{r+1}.
$$
It follows that $i(X)\leq r+1$.

By symmetry, applying the same argument to $Y$ gives
$i(Y)\leq r+1$. Consequently, $k\leq r+1$, or, equivalently, $r\geq k-1$.

Combining this inequality with the previously established upper bound, we obtain
$$
k-1
\leq r
\leq k.
$$

Suppose first that $i(X)=i(Y)=1$. Then the inequality $k-1\leq r\leq k$ gives $0\leq r\leq 1$. Since $X+Y$ is singular by hypothesis, its Drazin index cannot be zero. Therefore, $r=1=k$.

Suppose now, without loss of generality, that $k=i(X)\geq i(Y)$ and $i(X)>1$. If $r\neq k$  then $r=k-1=i(X)-1$. Since $(X+Y)^r(X^D+Y^D)(X+Y)=(X+Y)^r$, we obtain 
$$X^{i(X)-1}X^DX+Y^{i(X)-1}Y^DY=X^{i(X)-1}+Y^{i(X)-1}.$$ Assume, in addition, that $i(Y)<i(X)$. Then $Y^{i(X)-1}Y^DY=Y^{i(X)-1}$, and hence the preceding equality implies $X^{i(X)-1}X^DX=X^{i(X)-1}$, which contradicts the definition of $i(X)$. Therefore, $i(Y) <i(X)$ does not hold, which means $i(X)=i(Y)$. This proves that, whenever $i(X)\neq i(Y)$, $r=k$.
\endproof

\begin{lem}\label{lemma2-7}
    Given any von Neumann inverse $A^-$ of a square matrix $A$,
\begin{enumerate}
    \item  $(A^2A^- + I -AA^-)^\ell = A^{\ell +1}A^-+I-AA^-$;
    \item If $A$ is not group invertible, then $i(A^2A^-+I-AA^-)=i(A^2A^-)$, and
    \[
    (A^2A^-+I-AA^-)^D = (A^2A^-)^D + I-AA^-.
    \]
\end{enumerate}     
\end{lem}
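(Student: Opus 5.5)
The plan is to reduce everything to Lemma \ref{orthsum} by splitting $A^2A^-+I-AA^-$ as $X+Y$ with $X=A^2A^-$ and $Y=I-AA^-$.

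\textbf{Item 1.} First record that $E:=AA^-$ is idempotent, since $AA^-AA^-=AA^-$. Hence $Y=I-E$ is idempotent as well. The two products are
\[
XY=A^2A^-(I-AA^-)=A^2A^- - A(AA^-A)A^- = A^2A^- - A^2A^-=0,
\]
\[
YX=(I-AA^-)A^2A^- = A^2A^- - (AA^-A)AA^- = 0.
\]
Since $XY=YX=0$, the binomial expansion collapses to $(X+Y)^\ell=X^\ell+Y^\ell$. By Lemma \ref{Apower} we have $X^\ell=A^{\ell+1}A^-$, and by idempotency $Y^\ell=Y$. This gives item 1 directly.

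\textbf{Item 2.} I will apply Lemma \ref{orthsum} to $X$ and $Y$, whose products vanish as shown above.

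For the Drazin inverse of $Y$: since $Y$ is idempotent, $Y^D=Y$, and $i(Y)\le 1$. In fact $i(Y)=0$ when $Y=0$, i.e. when $A$ is invertible; otherwise $i(Y)=1$, since a nonzero proper idempotent is singular with index $1$.

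For the index of $X$: I need $i(X)\ge 2$ whenever $A$ is not group invertible. I will compare ranks. Because $A^{\ell+1}A^-A=A^{\ell+1}$ and $A^{\ell+1}=(A^{\ell+1}A^-)A$, we get
\[
\rank(X^\ell)=\rank(A^{\ell+1}A^-)=\rank(A^{\ell+1}) \quad \text{for } \ell\ge 1.
\]
For $\ell=0$, $X^0=I$, so index $0$ would require $X$ invertible, which is impossible when $A$ is singular. Hence $\rank X^\ell=\rank X^{\ell+1}$ for some $\ell\ge1$ iff $\rank A^{\ell+1}=\rank A^{\ell+2}$. This shows $i(X)=\max\{i(A)-1,1\}$, which is the same conclusion as the nilpotent case in Lemma \ref{Npower}. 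If $A$ is not group invertible, then $i(A)\ge 2$, so $i(X)=i(A)-1\ge 1$, and the group invertibility assumption enters exactly here.

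\textbf{Main obstacle.} The delicate point is verifying the hypotheses of Lemma \ref{orthsum}.
\begin{itemize}
\item[(a)] $X+Y$ must be singular. If $X+Y$ were invertible, its index would be $0$. I would handle this separately: the index comparison is then trivial, and the Drazin formula holds anyway because the defining equations are checked directly. The simplest route is to prove the Drazin formula directly for all cases: with $XY=YX=0$, $X^DY=0$ and $Y^DX=0$ (since $X^D$ is a polynomial in $X$ without constant term), and the three defining equations for $X^D+Y$ follow by expanding.
\item[(b)] The indices of $X$ and $Y$ must be compatible. If $i(Y)=0$ then $Y=0$ and the claim is trivial. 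Otherwise $\min\{i(X),i(Y)\}=1$ or $i(X)\neq i(Y)$, so part 1 of Lemma \ref{orthsum} gives $i(X+Y)=\max\{i(X),1\}=i(X)$, because $i(X)\ge1$.
\end{itemize}
Finally, combining $(X+Y)^D=X^D+Y^D$ with $Y^D=I-AA^-$ yields the stated formula.
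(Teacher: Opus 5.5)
Your route is the paper's: split $A^2A^-+I-AA^-=X+Y$ with $X=A^2A^-$ and $Y=I-AA^-$, check $XY=YX=0$, and invoke Lemma~\ref{orthsum}(1). Item 1, the direct check of the Drazin formula, and your rank computation $i(X)=\max\{i(A)-1,1\}$ are all correct, and more explicit than the paper's one-line proof.

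\textbf{The gap: hypothesis (a).} You write that if $X+Y$ were invertible, ``the index comparison is then trivial''. It would in fact be false, since then $i(X+Y)=0$ while $i(X)\ge 1$.

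This case cannot be waved away. When $i(A)=2$ you have $i(X)=i(Y)=1$. That is exactly the situation in Lemma~\ref{orthsum} where the general bound only gives $i(X+Y)\ge 0$, and singularity is what forces $i(X+Y)=1$. For $i(A)\ge 3$ the bound $i(X+Y)\ge i(X)-1\ge 1$ makes singularity automatic.

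So you must actually prove that $X+Y$ is singular. This takes one line:
\[
(A^2A^-+I-AA^-)A=A^2A^-A+A-AA^-A=A^2 .
\]
If $A^2A^-+I-AA^-$ were invertible, this would give $\rank A^2=\rank A$, i.e.\ $A$ would be group invertible. Equivalently, with $A$ of order $n$, $\rank(X+Y)\le \rank A^2+n-\rank A<n$. The paper leaves this point implicit as well, but since you raised it explicitly, your resolution has to be the right one.

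\textbf{A smaller slip.} A zero matrix has index $1$, not $0$. In fact:
\begin{itemize}
\item $i(Y)=0$ iff $Y=I$ iff $A=0$;
\item $Y=0$ iff $A$ is invertible.
\end{itemize}
Both cases are group invertible and hence excluded, so under the hypothesis $Y$ is a nonzero proper idempotent and $i(Y)=1$, as you need. However, your remark that the claim is ``trivial'' when $i(Y)=0$ is wrong in substance. For $A=0$ you get $X+Y=I$, whose index is $0\neq 1=i(X)$. It is the hypothesis, not triviality, that rules this case out.
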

\proof (2) follows from Lemma \ref{orthsum}(1), taking into account that $i(I-AA^-)=1\le i(A^2A^-)$. \endproof 

\begin{lem}
\label{schurcomp}
Let
\[
M = 
\mxl{cc}
A & B \\
C & \D
\mxr
\]
be a block matrix with $A$ invertible, and its associated Schur complement $Z= D - C A^{-1}B$.
\begin{enumerate}
\item Given a von Neumann inverse $Z^-$ of $Z$, then
$$M^-   = 
\mxl{cc}
I & -A^{-1} B \\
0 & I
\mxr
\mxl{cc}
A^{-1} & 0 \\
0 & Z^{-}
\mxr
\mxl{cc}
I & 0 \\
-C A^{-1} & I
\mxr$$ is a von Neumann inverse of $M$. 

\item $M$ is invertible if and only if $Z$ is invertible, in which case
\[
M^{-1} = 
\mxl{cc}
I & -A^{-1} B \\
0 & I
\mxr
\mxl{cc}
A^{-1} & 0 \\
0 & Z^{-1}
\mxr
\mxl{cc}
I & 0 \\
-C A^{-1} & I
\mxr.
\]
\end{enumerate}
\end{lem}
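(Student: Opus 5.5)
The approach is the classical block $LDU$ factorization of $M$ with respect to the invertible pivot $A$. Note that the statement writes the $(2,2)$ block as $\D$ (presumably a typo for $D$); I treat it as an ordinary matrix $D$ of the appropriate size. First I verify by direct multiplication that
\[
M=\mxl{cc} I & 0 \\ CA^{-1} & I \mxr \mxl{cc} A & 0 \\ 0 & Z \mxr \mxl{cc} I & A^{-1}B \\ 0 & I \mxr ,
\]
with $Z=D-CA^{-1}B$. Multiplying out the three factors, the $(1,1)$ block is $A$ and the $(1,2)$ block is $AA^{-1}B=B$. The $(2,1)$ block is $CA^{-1}A=C$, and the $(2,2)$ block is $CA^{-1}B+Z=D$. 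Write this as $M=LKU$, where $L$ and $U$ are the unipotent block triangular factors. Their inverses are obtained by negating the off-diagonal blocks:
\[
L^{-1}=\mxl{cc} I & 0 \\ -CA^{-1} & I \mxr,
\qquad
U^{-1}=\mxl{cc} I & -A^{-1}B \\ 0 & I \mxr.
\]
These are exactly the outer factors appearing in the proposed formula.

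For part (1), I use the general fact that if $M=LKU$ with $L,U$ invertible and $K^-$ is any von Neumann inverse of $K$, then $U^{-1}K^-L^{-1}$ is a von Neumann inverse of $M$. Indeed,
\[
M(U^{-1}K^-L^{-1})M = LKUU^{-1}K^-L^{-1}LKU = LKK^-KU = LKU = M .
\]
It then remains to note that
\[
K^-=\mxl{cc} A^{-1} & 0 \\ 0 & Z^- \mxr
\]
is a von Neumann inverse of $K=\diag(A,Z)$. This holds because $KK^-K=\diag(AA^{-1}A,\,ZZ^-Z)=\diag(A,Z)=K$. Substituting $K^-$ and the explicit $U^{-1}$, $L^{-1}$ gives exactly the displayed expression.

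For part (2), $M=LKU$ with $L,U$ invertible, so $M$ is invertible if and only if $K$ is. A block diagonal matrix is invertible if and only if each diagonal block is, and $A$ is invertible by hypothesis, so $K$ is invertible if and only if $Z$ is. Alternatively, one can use the rank identity $\rank M=\rank A+\rank Z$, which follows from the same factorization. When $Z$ is invertible, $K^{-1}=\diag(A^{-1},Z^{-1})$, and $M^{-1}=U^{-1}K^{-1}L^{-1}$ is the stated formula. This can also be read off from part (1), since a von Neumann inverse of an invertible matrix is its inverse.

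No step is a genuine obstacle; the only care needed is matching the order of the factors. The formula places $U^{-1}$, the factor containing $-A^{-1}B$, on the left and $L^{-1}$ on the right, so the factorization must be written as $LKU$ and not $UKL$.
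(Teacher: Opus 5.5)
Your proof is correct and uses the same approach as the paper: the paper's proof consists only of the factorization $M=LKU$ with $K=\operatorname{diag}(A,\,D-CA^{-1}B)$. You have written out the verifications the paper leaves implicit, namely that $U^{-1}K^{-}L^{-1}$ is a von Neumann inverse of $M$ and that $M$ is invertible exactly when $Z$ is.
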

\proof The proof follows by considering the factorization $$
M = 
\mxl{cc}
I & 0 \\
C A^{-1} & I
\mxr
\mxl{cc}
A & 0 \\
0 & D - C A^{-1}B
\mxr
\mxl{cc}
I & A^{-1} B \\
0 & I
\mxr.$$ \endproof

\begin{lem}
Let $W$ be a square matrix and $ W^{-} \in W{\{1\}}$. Then, for any positive integer $ n $,
\begin{itemize}
    \item $(W^D W W^{-})^{n+1} = (W^D)^{n} W^{-};$
    \item $W^D W W^{-} W^D = (W^D)^2.$
\end{itemize}
\end{lem}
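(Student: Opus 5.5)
The plan is to use only two facts about the Drazin inverse: $W^D$ commutes with $W$, and $W^D = W^DWW^D$. From these we derive the identity $WW^-W^D = W^D$, and both items then follow by direct computation, the first by induction on $n$.

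\emph{Step 1 (key identity).} Since $W^D = W^D W W^D = W (W^D)^2$, and $WW^-W=W$, we get
\[
WW^-W^D = WW^-W(W^D)^2 = W(W^D)^2 = W^D .
\]
The same reasoning gives $(W^D)^2W = W^D$, which we also need below.

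\emph{Step 2 (second item).} Multiplying Step 1 on the left by $W^D$ gives
\[
W^DWW^-W^D = W^D(WW^-W^D) = (W^D)^2 .
\]
This settles the second item.

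\emph{Step 3 (first item, base case $n=1$).} Expand the square and apply the second item to the middle factors:
\[
(W^DWW^-)^2 = (W^DWW^-W^D)\,WW^- = (W^D)^2 W\, W^- = W^D W^- .
\]
The last equality uses $(W^D)^2W=W^D$ from Step 1.

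\emph{Step 4 (first item, inductive step).} Assume $(W^DWW^-)^{n+1}=(W^D)^nW^-$ for some $n\ge 1$. Then
\[
(W^DWW^-)^{n+2} = W^DWW^-\,(W^D)^nW^- = (W^DWW^-W^D)(W^D)^{n-1}W^- .
\]
By the second item this equals $(W^D)^2(W^D)^{n-1}W^- = (W^D)^{n+1}W^-$, which completes the induction.

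None of these steps is a real obstacle. The only point that needs care is the bookkeeping in the base case, where we must apply $(W^D)^2W=W^D$ rather than attempt to cancel $W^-$.
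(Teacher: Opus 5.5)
Your proof is correct and complete. The paper states this lemma without any proof, so there is no argument of the authors to compare yours with; your derivation fills that gap. You use only $WW^D=W^DW$ and $W^DWW^D=W^D$, with no condition on the index. From these you get the two absorption identities $WW^-W^D=W^D$ and $(W^D)^2W=W^D$, and both items follow from them.

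Each step checks out, including the base case $(W^DWW^-)^2=(W^D)^2WW^-=W^DW^-$. In the inductive step at $n=1$, read $(W^D)^0$ as $I$. The restriction to $n\ge 1$ is genuinely needed. For $n=0$ the first formula would say $W^DWW^-=W^-$, which fails for any nonzero nilpotent $W$: there $W^D=0$ but $W^-\neq 0$. Your induction correctly starts at $n=1$.

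A non-inductive version of the same idea also works. Write
\[
(W^DWW^-)^{n+1}=W^D\,(WW^-W^D)^n\,WW^-=(W^D)^{n+1}WW^-,
\]
then use $(W^D)^{n+1}W=(W^D)^n$ for $n\ge 1$. This is only a repackaging of your argument, not a different route.
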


\begin{lem}\label{Y_from_W}
Let $Y= \mxl{cc} 0 & WW^{-} \\  W & 0 \mxr$ where $W$ is a singular matrix and $W^-\in W\{1\}$ . Then $i(Y)=2i(W)-1$ and $$Y^D=\mxl{cc} 0 & W^DWW^{-} \\ WW^D & 0 \mxr.$$
\end{lem}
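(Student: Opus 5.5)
Let $P=WW^-$, an idempotent with $PW=W$. The plan is to compute the powers of $Y$ explicitly, read off the index from them, and then check that the proposed $Y^D$ satisfies the three Drazin equations. First,
\[
Y^2=\mxl{cc} WW^-W & 0\\ 0 & WWW^-\mxr=\mxl{cc} W & 0\\ 0 & W^2W^-\mxr .
\]
By Lemma \ref{Apower}, induction then gives, for $m\ge 1$,
\[
Y^{2m}=\mxl{cc} W^m & 0\\ 0 & W^{m+1}W^-\mxr,\qquad
Y^{2m+1}=\mxl{cc} 0 & W^{m}W^- \\ W^{m+1} & 0\mxr .
\]
Here $W^mP=W^mWW^-$, and the lower block of the odd power is $W^{m+1}W^-W=W^{m+1}$.

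Ranks follow from these formulas. Since $W^{j+1}W^-W=W^{j+1}$ and $W^{j+1}W^-=W^{j+1}W^-$, we have $\rank(W^{j+1}W^-)=\rank(W^{j+1})$. Likewise $\rank(W^mW^-)=\rank(W^m)$, because $W^mW^-W=W^m$. Hence
\[
\rank Y^{2m}=\rank W^m+\rank W^{m+1},\qquad \rank Y^{2m+1}=\rank W^m+\rank W^{m+1}
\]
for $m\ge1$. For $m=0$ we have $\rank Y=\rank P+\rank W=2\rank W$ and $\rank I=2n$. Put $r_j=\rank W^j$ and $k=i(W)\ge1$ (recall $W$ is singular). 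Then $\rank Y^{2m}=\rank Y^{2m+1}$ always holds. The step $\rank Y^{2m-1}$ versus $\rank Y^{2m}$ compares $r_{m-1}+r_m$ with $r_m+r_{m+1}$, which are equal iff $r_{m-1}=r_{m+1}$, that is, iff $m-1\ge k$. One must be careful with the stabilization: the index is the first $j$ with $\rank Y^j=\rank Y^{j+1}$, and equality at even $j=2m$ holding for all $m$ does not by itself make the index small. So I will phrase the criterion via kernels and check that $\rank Y^{j}=\rank Y^{j'}$ for all $j'\ge j$.

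Using the formulas, $\rank Y^{2m}=r_m+r_{m+1}$ is constant from $m=k$ on and strictly larger at $m=k-1$. Also $\rank Y^{2k-1}=r_{k-1}+r_k>r_k+r_{k+1}=\rank Y^{2k}$, while $\rank Y^{2k-2}=\rank Y^{2k-1}$. So the rank sequence stabilizes exactly at exponent $2k$, and would give $i(Y)=2k$, not $2k-1$.

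This is the main obstacle: the rank test seems to contradict the claimed value, so I must recheck the even-power formula. Redoing it, $Y^3=Y\cdot Y^2$ has top-right block $P\cdot W^2W^-$ and bottom-left block $W\cdot W=W^2$. The odd powers are therefore $\mxl{cc}0&W^{m+1}W^-\\W^{m+1}&0\mxr$, and $\rank Y^{2m+1}=2r_{m+1}$. With this correction, $\rank Y^{2m}=r_m+r_{m+1}$ and $\rank Y^{2m+1}=2r_{m+1}$. These are equal iff $r_m=r_{m+1}$, that is, iff $m\ge k$. Also $\rank Y^{2m-1}=2r_m$ equals $\rank Y^{2m}=r_m+r_{m+1}$ iff $r_m=r_{m+1}$, again iff $m\ge k$. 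So the first $j$ with equality is $j=2k-1$ (the case $m=k$ in the second comparison), and $i(Y)=2k-1$. Small $m$ (for example $m=0$, with $\rank Y=2r_1$) fit the same pattern. In the write-up I would present the corrected power formulas from the start.

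For the Drazin inverse, set $X=\mxl{cc}0&W^DP\\WW^D&0\mxr$. I will check $XY=YX$: both equal $\diag(W^DW,\;WW^DP)$, using $W^DPW=W^DW$ and $PWW^D=WW^D$. Next, $XYX=X$ follows from $W^DWW^DP=W^DP$ and $WW^DW^DP\cdot$ reductions; the preceding unnumbered lemma, $W^DWW^-W^D=(W^D)^2$, handles the cross terms. Finally, $Y^{2k-1}XY=Y^{2k-1}$: with $Y^{2k-1}=\mxl{cc}0&W^kW^-\\W^k&0\mxr$ and $XY$ block diagonal, this reduces to $W^kW^-WW^DP=W^kW^DP=\dots=W^kW^-$ and $W^kW^DW=W^k$, both of which follow from $W^{k+1}W^D=W^k$.
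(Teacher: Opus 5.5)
After your self-correction, your proof is correct and follows the paper's route. The top-right block of $Y^{2m+1}$ is $W^{m+1}W^-$, not $W^mW^-$, and that slip alone produced the spurious value $2k$. With it fixed, you use the same inductive formulas for $Y^{2m}$ and $Y^{2m+1}$ as the paper and then verify the three Drazin equations for the proposed $Y^D$ directly.

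The only real difference is how you get $i(Y)=2k-1$. You read it off the rank sequence $\rank Y^{2m}=r_m+r_{m+1}$, $\rank Y^{2m+1}=2r_{m+1}$, which gives both bounds at once. The paper instead derives the lower bound from a column-space comparison of $Y^{2k-2}$ and $Y^{2k-1}$. In the final write-up, present only the corrected formulas.
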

\proof

Let  $i(W)=k\geq 1$. In order to show  $ i(Y)=2k-1$,  we claim that $Y^{2l}=\mxl{cc} W^l & 0 \\   0 & W^{l+1}W^{-} \mxr$, which we prove by induction.

For $l=1$ the equality holds since $ Y^{2}=\mxl{cc} W & 0 \\  0 & W^2W^{-} \mxr$. For the inductive step, 
    \begin{align*}
        Y^{2(l+1)} &=Y^{2l}Y^2 \\
        &= \mxl{cc} W^l & 0 \\ 0 & W^{l+1}W^{-} \mxr \mxl{cc} W & 0 \\  0 & W^2W^{-} \mxr \\
        &= \mxl{cc} W^{l+1} & 0 \\  0 & W^{l+1}WW^2W^{-} \mxr \\
        &= \mxl{cc} W^{l+1} & 0 \\  0 & W^{l}WW^{-}WWW^{-} \mxr \\
        &= \mxl{cc} W^{l+1} & 0 \\  0 & W^{l+2}W^{-} \mxr. 
    \end{align*}
    
 Furthermore,
\begin{align*}
    Y^{2l+1} = YY^{2l} &= \mxl{cc} 0 & WW^{-} \\ W & 0 \mxr \mxl{c c} W^l & 0 \\  0 & W^{l+1}W \mxr \\
    &= \mxl{cc} 0 & WW^{-}W^{l+1}W^{-} \\ W^{l+1} & 0 \mxr = \mxl{cc} 0 & WW^{-}WW^{l}W^{-} \\ W^{l+1} & 0 \mxr \\
    &= \mxl{cc} 0 & W^{l+1}W^{-} \\ W^{l+1} & 0 \mxr
\end{align*}
 and $i(Y)\leq 2k-1$. 
 
 Suppose now $i(Y) < 2k-1.$ Then 
$CS(Y^{2k-2})=CS(Y^{2k-1})$ with $Y^{2k-2}=Y^{2(k-1)}=\mxl{cc} W^{k-1} & 0 \\ 0 & W^kW^{-}\mxr$ and $Y^{2k-1}= \mxl{cc} 0 & W^kW^{-} \\ W^k & 0 \mxr.$ 
As $CS\left( \mxl{c} 0 \\ W^k \mxr \right)=CS\left( \mxl{c} 0 \\ W^kW^{-} \mxr \right)$, then $CS\left( \mxl{c} W^{k-1} \\ 0\mxr \right)=CS\left( \mxl{c} W^{k}W^{-} \\ 0\mxr \right)$, which gives  $CS\left( W^{k-1} \right)=CS\left( \ W^k  \right)$,  contradicting  $i(W)=k$. 

So, $i(Y)=2k-1=2i(W)-1.$ 

We are left to show that 
$Y^D=\mxl{cc} 0 & W^DWW^{-} \\ WW^D & 0 \mxr=Z$. We now check $Z$ satisfies Drazin's equations. 

\begin{itemize}
    \item[(a).] 
    \begin{eqnarray*}YZ&=& \mxl{cc} 0 & WW^{-} \\ W & 0 \mxr \mxl{cc} 0 & W^DWW^{-} \\ WW^D & 0 \mxr=\mxl{cc} WW^{-}WW^D & 0 \\ 0 & WW^DWW^{-} \mxr\\
    & =& \mxl{cc} W^DWW^{-}W & 0 \\ 0 & WW^DWW^{-} \mxr = \mxl{cc} 0 & W^DWW^{-} \\ WW^D & 0 \mxr \mxl{cc} 0 & WW^{-} \\ W & 0 \mxr = ZY.
        \end{eqnarray*}

    \item[(b).] \begin{eqnarray*}ZYZ &= &Z(ZY)=\mxl{cc} 0 & W^DWW^{-} \\ WW^D & 0 \mxr\mxl{cc} W^DW & 0 \\ 0 & WW^DWW^{-} \mxr \\
    &=& \mxl{cc} 0 & W^DWW^{-}WW^DWW^{-} \\ WW^DW^DW & 0 \mxr\\
    &=& \mxl{cc} 0 & W^DWW^DWW^{-} \\ W^DWW^DW & 0 \mxr \\
    &=& \mxl{cc} 0 & W^DWW^{-} \\ W^DW & 0 \mxr =Z.
    \end{eqnarray*}
    \item[(c).] We can take $i(W)=k,$ we still need to verify that $Y^{(2k-1)+1}Z=Y^{2k-1}$, since $i(Y)=2k-1$, i.e., $Y^{2k}Z=Y^{2k-1}.$
\end{itemize}

Since $W^kW^DWW^{-}=W^kWW^DW^{-}=W^{k+1}W^DW^{-}$ and $W^{k+1}W^{-}WW^D=W^kWW^{-}WW^D=W^{K+1}W^D$ and with $W^{k+1}W^D=W^k$, we have 
\begin{eqnarray*}
Y^{2k}Z&=& \mxl{cc} W^k & 0 \\ 0 & W^{k+1}W^{-}\mxr
\mxl{cc} 0 & W^DWW^{-} \\ WW^D & 0\mxr  \\
&=& \mxl{cc} 0 & W^kW^DWW^{-} \\ W^{k+1}W^{-}WW^D & 0\mxr
= \mxl{cc} 0 & W^{k+1}W^DW^{-} \\ W^{k+1}W^D & 0\mxr \\
&=& \mxl{cc} 0 & W^{k}W^{-} \\ W^{k} & 0\mxr = Y^{2k-1}=Y^{2(k-1)+1}.
\end{eqnarray*}

From (a), (b) and (c) we can conclude, in fact, that $Y^D=Z.$

\endproof

\begin{lem}
\label{YY^Dpowers}
Let \( Y = \mxl{cc} 0 & W W^{-} \\ W & 0 \mxr \), where \( W \) is a square matrix with   \( W^{-} \in W{\{1\}} \). Then, for any integer \( n \geq 1 \),  

\ben
    \item $Y^n=
\begin{cases}
\mxl{cc} W^{\frac{n}{2}} & 0 \\ 0 & W^{\frac{n}{2}+1 } W^- \mxr, & n \text{ is even} \\ 
\\
\mxl{cc} 0 & W^{\frac{n+1}{2}}W^{-} \\ W^{\frac{n+1}{2}} & 0 \mxr,  & n \text{ is odd}
\end{cases}$

\item $(Y^D)^n=
\begin{cases}
\mxl{cc} (W^D)^{\frac{n}{2}} & 0 \\ 0 & (W^D)^{\frac{n}{2}}WW^{-} \mxr, & n \text{ is even} \\ 
\\
\mxl{cc} 0 & (W^D)^{{\frac{n+1}{2}}}WW^{-} \\ (W^D)^{\frac{n+1}{2}} & 0 \mxr,  & n \text{ is odd}
\end{cases}$
\een
\end{lem}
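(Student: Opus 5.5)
Part (1) was essentially established inside the proof of Lemma \ref{Y_from_W}. We prove it here in general by induction on $l$, for powers $Y^{2l}$ and $Y^{2l+1}$. We will repeatedly use the identity $W^{j}W^{-}W=W^{j}$ for $j\ge 1$, which follows from $WW^-W=W$.

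For the base cases, a direct multiplication gives
\[
Y^2=\mxl{cc} WW^-W & 0 \\ 0 & W^2W^- \mxr=\mxl{cc} W & 0\\ 0 & W^2W^-\mxr,
\]
and $Y^1=Y$ agrees with the odd formula because $WW^-=W^{1}W^-$.

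For the inductive step in the even case, write $Y^{2l+2}=Y^{2l}Y^2$. The lower-right block is $W^{l+1}W^-W^2W^-=W^{l+2}W^-$, using $W^{l+1}W^-W=W^{l+1}$. The upper-left block is $W^{l+1}$, so the claimed form is preserved.

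For the odd case, write $Y^{2l+1}=YY^{2l}$. The upper-right block is $WW^-W^{l+1}W^-=W^{l+1}W^-$, since $WW^-W=W$. The lower-left block is $W\cdot W^l=W^{l+1}$. Setting $n=2l+1$ gives $\frac{n+1}{2}=l+1$, as required.

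For (2), we use the formula $Y^D=\mxl{cc} 0 & W^DWW^- \\ WW^D & 0\mxr$ from Lemma \ref{Y_from_W}. Its proof did not really use the singularity of $W$: the three Drazin equations were checked using only $WW^D=W^DW$, $W^DWW^D=W^D$ and $W^{k+1}W^D=W^k$. So we take this expression for $Y^D$ in general.

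The key simplification is that
\[
(Y^D)^2=\mxl{cc} W^DWW^-WW^D & 0\\ 0 & WW^DW^DWW^-\mxr=\mxl{cc} W^D & 0 \\ 0 & W^DWW^- \mxr ,
\]
which follows from $W^-W$ being absorbed by $W$ and from $W^DWW^D=W^D$. Writing $P=WW^-$, the even case then follows by induction. The key identity is $W^DWW^-(W^D)^mWW^-=(W^D)^{m+1}WW^-$. To prove it, commute $W$ past $W^D$, which gives $WW^-W(W^D)^m=W(W^D)^m$, and then use $W^DW(W^D)^m=(W^D)^{m+1}$.

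The odd case is then $(Y^D)^{2l+1}=(Y^D)^{2l}Y^D$. The upper-right block is $(W^D)^lW^DWW^-=(W^D)^{l+1}WW^-$. The lower-left block is $(W^D)^lWW^-WW^D=(W^D)^{l+1}W$.

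Here the statement writes the lower-left block as $(W^D)^{\frac{n+1}{2}}$, whereas the computation gives $(W^D)^{l+1}W=(W^D)^{l}\,W^DW$. This factor $W^DW$ matters even at $n=1$, where the block is $WW^D$. I expect this block bookkeeping to be the main obstacle. The plan is to derive it carefully and reconcile it with the intended formula, using $(W^D)^{m}W^DW=(W^D)^{m}$ for $m\ge1$. That identity follows from $W^DWW^D=W^D$ and the commutativity of $W$ with $W^D$. So the stated formula holds for $n\ge3$, and for $n=1$ the block must be read as $WW^D$, as in Lemma \ref{Y_from_W}.
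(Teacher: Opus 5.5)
Your part (1) is the same induction carried out inside the proof of Lemma \ref{Y_from_W}; the paper gives no separate proof of this lemma. Your even-power blocks and the upper-right odd-power block of $(Y^D)^n$ are also correct.

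\textbf{The gap is in your final reconciliation step.} You correctly obtain the lower-left block of $(Y^D)^{2l+1}$ as $(W^D)^{l}W^DW=W(W^D)^{l+1}$. The identity $(W^D)^mW^DW=(W^D)^m$ for $m\ge1$ is also correct. But applied with $m=l$ it gives $(W^D)^{l}=(W^D)^{\frac{n-1}{2}}$, not $(W^D)^{l+1}=(W^D)^{\frac{n+1}{2}}$. The factor $W^DW=WW^D$ is idempotent: it can be absorbed, but it never becomes an extra factor $W^D$. So your claim that the stated formula holds for $n\ge3$ is an off-by-one error.

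In fact the lemma as written is false in this block for every odd $n$. It holds only when $W^D$ is idempotent, for example when $W$ is nilpotent. Take $W=\diag(2,0)$ and $W^-=W^D=\diag(\tfrac12,0)$:
\begin{itemize}
\item For $n=1$, the lower-left block of $Y^D$ is $WW^D=\diag(1,0)$, but the statement predicts $\diag(\tfrac12,0)$.
\item For $n=3$, the lower-left block of $(Y^D)^3$ is $W^DWW^-\,WW^D=W^D=\diag(\tfrac12,0)$, but the statement predicts $\diag(\tfrac14,0)$.
\end{itemize}
What your computation actually proves is the corrected odd case. There the lower-left block is $W(W^D)^{\frac{n+1}{2}}$, that is, $WW^D$ for $n=1$ and $(W^D)^{\frac{n-1}{2}}$ for $n\ge3$. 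You should state this and flag the lemma as incorrect, rather than assert agreement with it. The correction matters downstream: the paper uses exactly this block to define $\delta$ in Theorem \ref{MainTh}(4).

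The same slip appears in your justification of the even case. The step $W^DW(W^D)^m=(W^D)^{m+1}$ is false; for $m\ge1$ the left side equals $(W^D)^m$. The identity you actually need, $W^DWW^-(W^D)^mWW^-=(W^D)^{m+1}WW^-$, is nevertheless true. It follows directly from $WW^-W^D=WW^-W(W^D)^2=W(W^D)^2=W^D$. So your even-case conclusion stands once you replace that justification.
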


\section{The Drazin index and minimal polynomials of special sums}

We now present results concerning the Drazin inverse, and in particular the connection between Drazin indices and  von Neumann invertibility. We further explore these relations by considering minimal polynomials.

\begin{prop}\label{jacobson}
Given matrices $A$ and $B$ such that both products $AB$ and $BA$ are defined, one of the following identities holds:
\[
\psi_{I-AB}(\lambda)=\psi_{I-BA}(\lambda), \quad
\psi_{I-AB}(\lambda)=(\lambda-1)\psi_{I-BA}(\lambda), \quad
\text{or} \quad
\psi_{I-BA}(\lambda)=(\lambda-1)\psi_{I-AB}(\lambda).
\]
Moreover,
\[
i(I-AB)=i(I-BA).
\]
%Given matrices $A$ and $B$ of conformal sizes, 
%\[
%\psi_{I-AB}(\lb) = (\lb-1)^{0, \pm1} \psi_{I-BA}(\lb) \text{ and }i(I-AB)=i(I-BA).
%\]
\end{prop}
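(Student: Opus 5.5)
The natural route is to transfer Lemma \ref{indexAB} through the affine change of variable $\lambda \mapsto 1-\lambda$. For any square matrix $X$ and any polynomial $p$, we have $p(I-X)=0$ if and only if $q(X)=0$, where $q(\lambda)=p(1-\lambda)$. Since this substitution is a degree-preserving ring automorphism of $\F[\lambda]$ (up to the sign of the leading coefficient), it gives
\[
\psi_{I-X}(\lambda)=\pm\,\psi_X(1-\lambda),
\]
with the sign fixed so that the result is monic. Here $(-1)^{\deg\psi_X}$ is the correcting factor.

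We apply this with $X=AB$ and $X=BA$. By Lemma \ref{indexAB}, exactly one of three cases occurs: $\psi_{AB}=\psi_{BA}$, $\psi_{AB}(\lambda)=\lambda\,\psi_{BA}(\lambda)$, or the symmetric one.
\begin{enumerate}
\item In the first case, substitution gives $\psi_{I-AB}=\psi_{I-BA}$ immediately.
\item In the second case, $\psi_{I-AB}(\lambda)=\pm(1-\lambda)\psi_{BA}(1-\lambda)$. The degrees differ by one, so the sign normalizations differ by a factor $-1$, and this absorbs the sign of $(1-\lambda)$. Tracking it carefully yields $\psi_{I-AB}(\lambda)=(\lambda-1)\psi_{I-BA}(\lambda)$.
\item The third case is symmetric.
\end{enumerate}
The only care needed is this sign bookkeeping between monic normalizations. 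It is routine but is the step where an error could slip in.

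For the index statement, recall that $i(Z)$ is the multiplicity of $0$ as a root of $\psi_Z$. In all three cases $\psi_{I-AB}$ and $\psi_{I-BA}$ differ by at most a factor $(\lambda-1)$. That factor does not vanish at $0$, so the multiplicity of the root $0$ is the same for both polynomials. Hence $i(I-AB)=i(I-BA)$.

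This also covers the invertible situation, where both indices are $0$. Alternatively, one could invoke Jacobson's lemma: $I-AB$ is invertible if and only if $I-BA$ is. The minimal polynomial argument already contains this fact, so it is not needed separately.
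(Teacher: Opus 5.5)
Your proposal is correct and follows essentially the same route as the paper: transfer Lemma \ref{indexAB} through an affine change of variable, then observe that the extra factor $(\lambda-1)$ does not vanish at $0$, so the multiplicity of the root $0$ (that is, the index) is the same for $I-AB$ and $I-BA$. Your normalization $\psi_{I-X}(\lambda)=(-1)^{\deg\psi_X}\psi_X(1-\lambda)$ is the precise form of the substitution that the paper writes loosely as $\psi_X(\lambda-1)$, and your sign tracking in the unequal-degree cases is right.
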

\proof 
Set $X=AB$, $Y=BA$, $K=I-X$ and $W=I-Y$. Consider the factorization $\psi_K (\lb) = \lb^k f(\lb)$, where $gcd(\lb, f(\lb))=1$.
%Then
%\[
%\lb^k f(\lb)= \psi_K (\lb)=\psi_X(\lb-1)=(\lb-1)^{0,\pm 1} \psi_Y(\lb-1).
%\]
Then one of the following identities holds:
\[
\lambda^k f(\lambda)=\psi_Y(\lambda-1), \quad
\lambda^k f(\lambda)=(\lambda-1)\psi_Y(\lambda-1), \quad
\text{or} \quad
\psi_Y(\lambda-1)=(\lambda-1)\lambda^k f(\lambda).
\]
It follows that $\psi_Y(\lb-1)=\psi_W(\lb)=\lb^w g(\lb)$, where $gcd(\lb, g(\lb))=1$.  
Consequently, one of the following identities holds:
\[
\lambda^k f(\lambda)=\lambda^w g(\lambda), \quad
\lambda^k f(\lambda)=(\lambda-1)\lambda^w g(\lambda), \quad
\text{or} \quad
\lambda^w g(\lambda)=(\lambda-1)\lambda^k f(\lambda).
\]
%\[
%\lb^k f(\lb)=(\lb-1)^{0,\pm 1}\,\lb^w g(\lb).
%\]
Therefore $k=w$ and the Drazin indices of $I-AB$ and $I-BA$ are equal.
\endproof

As an example, consider $A= \left[\begin{array}{rrr|rr}
0 & 1 & 0 & 0 & 0 \\
0 & 0 & 1 & 0 & 0 \\
0 & 0 & 0 & 0 & 0 \\
\hline
 0 & 0 & 0 & -1 & 0 \\
0 & 0 & 0 & -2 & 2
\end{array}\right], B = \left[\begin{array}{rr|rr|r}
0 & 1 & 0 & 0 & 0 \\
0 & 0 & 0 & 0 & 0 \\
\hline
 0 & 0 & 0 & 0 & 0 \\
0 & 0 & -1 & -1 & 0 \\
\hline
 0 & 0 & 0 & 0 & -3
\end{array}\right]$.  Then $AB= \left[\begin{array}{rrrrr}
0 & 0 & 0 & 0 & 0 \\
0 & 0 & 0 & 0 & 0 \\
0 & 0 & 0 & 0 & 0 \\
0 & 0 & 1 & 1 & 0 \\
0 & 0 & 2 & 2 & -6
\end{array}\right], BA= \left[\begin{array}{rrrrr}
0 & 0 & 1 & 0 & 0 \\
0 & 0 & 0 & 0 & 0 \\
0 & 0 & 0 & 0 & 0 \\
0 & 0 & 0 & 1 & 0 \\
0 & 0 & 0 & 6 & -6
\end{array}\right]$, $\psi_{I-AB}(\lb) = (\lb - 7) \cdot (\lb - 1) \cdot \lb$ and $\psi_{I-BA}(\lb) =(\lb - 7) \cdot \lb \cdot (\lb - 1)^{2}$.  Indeed,  the indices of $I-AB$ and of $ I-BA $ are both equal to $1$ and $\psi_{I-BA}(\lb) = (\lb-1)\psi_{I-AB}(\lb) $.

\begin{prop}
    Given a square singular matrix $A$, there exists a von Neumann inverse $A^-$ of $A$ such that
    \[
    i(A) = i(A^2 A^- + I-AA^-) +1.
    \]
\end{prop}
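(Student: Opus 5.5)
The plan is to reduce to the nilpotent part of $A$ through its core--nilpotent decomposition, and to build $A^-$ block by block. Since $A$ is singular, there is an invertible $P$ with
\[
A = P\,\mxl{cc} C & 0 \\ 0 & N \mxr P^{-1},
\]
where $C$ is invertible (possibly void) and $N$ is nilpotent with $N^{k}=0\ne N^{k-1}$, where $k=i(A)\ge 1$. For any $N^-\in N\{1\}$ I will take
\[
A^- = P\,\mxl{cc} C^{-1} & 0 \\ 0 & N^- \mxr P^{-1}.
\]
Multiplying blocks gives $AA^-A=A$, so $A^-\in A\{1\}$. Moreover,
\[
A^2A^-+I-AA^- = P\,\mxl{cc} C & 0 \\ 0 & N^2N^-+I-NN^- \mxr P^{-1}.
\]
The index is a similarity invariant. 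The index of a block-diagonal matrix is the maximum of the indices of its blocks, and $C$ has index $0$. Hence $i(A^2A^-+I-AA^-)=i(N^2N^-+I-NN^-)$, and it suffices to show that this equals $k-1$.

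First take $k=1$, so $N=0$. Then $N^2N^-+I-NN^-=I$ has index $0=k-1$, as required.

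Now take $k\ge 2$. Applying Lemma \ref{Npower} with $k-1$ in place of $k$, the relations $N^{k}=0\ne N^{k-1}$ give $(N^2N^-)^{k-1}=0\ne (N^2N^-)^{k-2}$. So $N^2N^-$ is nilpotent with $i(N^2N^-)=k-1\ge 1$, and this holds for every choice of $N^-$. Since $k\ge2$, $N$ is not group invertible. Lemma \ref{lemma2-7}(2) therefore yields $i(N^2N^-+I-NN^-)=i(N^2N^-)=k-1$, which proves the proposition.

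The one step that needs checking is the application of Lemma \ref{lemma2-7}(2), since it rests on Lemma \ref{orthsum}. I would verify its hypotheses directly with $X=N^2N^-$ and $Y=I-NN^-$:
\[
(N^2N^-)(I-NN^-) = N^2N^- - N(NN^-N)N^- = 0, \qquad (I-NN^-)N^2N^- = N^2N^- - (NN^-N)NN^- = 0 .
\]
The matrix $Y$ is idempotent, so its index is at most $1$. The sum $X+Y$ is singular: any nonzero $v$ with $N^-v=0$ and $v\in R(N)$ would lie in its kernel. Such a $v$ exists when the generalized inverse is chosen suitably, for example the reflexive choice $N^+=N^-NN^-$.

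The main obstacle is that the case split in Lemma \ref{orthsum} must be handled correctly. If $k-1>1$, then $i(X)\ne i(Y)$ unless $Y$ also has index exceeding $1$, which is impossible for an idempotent. If $k-1=1$, then $\min\{i(X),i(Y)\}\le1$. In either case part (1) of Lemma \ref{orthsum} gives $i(X+Y)=\max\{i(X),i(Y)\}=k-1$. The only degenerate possibility is $Y=0$, and then $X+Y=X$ trivially has index $k-1$.
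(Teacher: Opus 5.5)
Your argument is correct and is essentially the paper's proof. Both use the core--nilpotent decomposition, the block-diagonal choice $A^-=P\diag(C^{-1},N^-)P^{-1}$, Lemma \ref{Npower} to get $i(N^2N^-)=k-1$, and Lemma \ref{lemma2-7}(2).

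The one difference works in your favour. The paper applies Lemma \ref{lemma2-7} to $A$ itself and reduces to showing $i(A)=i(A^2A^-)+1$. That cannot hold when $i(A)=1$: then $A^2A^-$ is singular, so $i(A^2A^-)\ge 1$, and the lemma does not apply because $A$ is group invertible. You instead pass to the block $N^2N^-+I-NN^-$ and treat $k=1$ separately, where the block is $I$, which covers this case correctly.

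The singularity check in your verification paragraph is wrong, although its conclusion is true. There is no nonzero $v\in R(N)$ with $N^-v=0$, for any $N^-\in N\{1\}$, reflexive or not. Indeed, if $v=Nw$ then $v=NN^-Nw=NN^-v$, so $N^-v=0$ forces $v=0$.

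Here is a correct witness. For $v\in R(N)$ one has $(I-NN^-)v=0$ and $N^2N^-v=N(NN^-v)=Nv$. So any nonzero $v=N^{k-1}u\in R(N)\cap\ker N$, which exists since $k\ge 2$, lies in $\ker(X+Y)$. Alternatively, Lemma \ref{lemma2-7}(1) gives $(X+Y)^{k-1}=N^kN^-+I-NN^-=I-NN^-$, which is singular because $N\neq 0$.

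Also, $Y=I-NN^-$ can be neither $0$ nor $I$. It is not $0$, since that would make $N$ invertible. It is not $I$, since that would force $N=NN^-N=0$. So $i(Y)=1$ exactly. This is what gives $\min\{i(X),i(Y)\}=1$, not merely $\le 1$, when $k=2$, and your ``degenerate case $Y=0$'' never occurs.
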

\proof 
It suffices to show that there exists $A^-$ such that $i(A)=i(A^2A^-)+1$, in view of Lemma \ref{lemma2-7}. 
Consider a core-nilpotent decomposition 
\[
A=U\mxx{cc}{ C & 0\\ 0 & N }U^{-1},
\]
where $C$ is nonsingular and $N$ is nilpotent of index $i(A)$.  
Consider the von Neumann inverse of $A$
\[
A^-=U\mxx{cc}{ C^{-1} & 0\\ 0 & N^- }U^{-1},
\]
where $N^-$ is a von Neumann inverse of $N$.  

Furthermore,
\[
A^2A^-=U\mxx{cc}{ C & 0\\ 0 & N^2 N^- }U^{-1}.
\]
Since the Drazin index is invariant under similarity, and because the nilpotency index of $N$ coincides with its Drazin index, we obtain
\[
i(A^2A^-)=i(N^2N^-)=i(N)-1=i(A)-1.
\]
\endproof

\begin{thm}\label{lem:inv-Aminus}
    Given a square matrix $A$, the Drazin index of $ A^2A^- + I - AA^-$ is invariant under the choice of von Neumann inverse $A^-$ of $A$.  
\end{thm}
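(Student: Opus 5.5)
The plan is to compute the ranks of the powers of $K=A^2A^-+I-AA^-$ explicitly and to show that they do not involve $A^-$. Since the index is the smallest $\ell$ with $\rank(K^\ell)=\rank(K^{\ell+1})$, invariance of the index then follows at once. By Lemma \ref{lemma2-7}(1) we already have
\[
K^\ell = A^{\ell+1}A^- + I - AA^-,
\]
so everything reduces to understanding this matrix.

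\textbf{Step 1: splitting the space.} The matrix $P=AA^-$ is idempotent with $R(P)=R(A)$, so $\F^n = R(A)\oplus \ker(AA^-)$. The complement $\ker(AA^-)$ depends on $A^-$, but its dimension $n-\rank A$ does not. I will show that $K^\ell$ preserves both summands.

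\textbf{Step 2: action on each summand.}
\begin{itemize}
\item Let $z\in\ker(AA^-)$. Then $A(A^-z)=0$, so $A^{\ell+1}A^-z=A^\ell(AA^-z)=0$. Hence $K^\ell z = z$, i.e. $K^\ell$ acts as the identity on $\ker(AA^-)$.
\item Let $y=Au\in R(A)$. Then $AA^-y=y$, so $K^\ell y = A^{\ell+1}A^-Au = A^\ell(AA^-A)u = A^{\ell+1}u = A^\ell y$. Hence $K^\ell$ restricted to $R(A)$ equals $T^\ell$, where $T=A|_{R(A)}:R(A)\to R(A)$.
\end{itemize}
The operator $T$ is defined without any reference to $A^-$.

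\textbf{Step 3: the rank formula.} With respect to the decomposition of Step 1, $K^\ell$ is similar to $\diag(T^\ell, I)$. Therefore
\[
\rank(K^\ell)=\rank(T^\ell)+n-\rank A=\rank(A^{\ell+1})+n-\rank A,
\]
using $R(T^\ell)=A^\ell R(A)=R(A^{\ell+1})$. This expression is independent of $A^-$ for every $\ell\ge 0$, so the index $i(K)$ is independent of the choice of $A^-$.

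As a by-product, $i(K)=i(T)$, which is $\max\{i(A)-1,0\}$. This agrees with the preceding proposition.

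The only delicate point is Step 2: one must check that the non-canonical summand $\ker(AA^-)$ is genuinely invariant under $K^\ell$, which rests on $\ker(AA^-)=\ker(A^-)$ modulo $\ker A$. One must also check that the action on $R(A)$ collapses to $A^\ell$ via $AA^-A=A$. Both verifications are one-line computations.
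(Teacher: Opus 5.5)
Your proof is correct, and it takes a different route from the paper.

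\textbf{The paper's route.} The paper never identifies the index in this theorem. It notes that
$A^2A^-+I-AA^- = I+AA^=(A^2A^- -AA^-)$ and $(A^2A^- -AA^-)AA^= = A^2A^= -AA^=$.
So the two matrices have the form $I+XY$ and $I+YX$, and Proposition \ref{jacobson} gives equal indices (that proposition itself comes from the relation between $\psi_{XY}$ and $\psi_{YX}$). The value $i(A)-1$ is obtained separately, from a core-nilpotent decomposition with one particular choice of $A^-$.

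\textbf{Your route.} You use the splitting $\F^n=R(A)\oplus\ker(AA^-)$ to show that $K$ is similar to $A|_{R(A)}\oplus I_{n-\rank A}$. This is an operator that does not depend on $A^-$, plus an identity block of fixed size. It is more elementary and more informative than the paper's argument:
\begin{itemize}
\item It gives invariance together with the exact value $i(K)=\max\{i(A)-1,0\}$ in one stroke. This also covers the preceding proposition and the value stated in Theorem \ref{DrazinInv}, uniformly in the invertible and group-invertible cases.
\item For singular $A$ it gives $\psi_K=\lcm\{\psi_{A|_{R(A)}},\lambda-1\}$. That is the invariance of the minimal polynomial, which the paper establishes later through a parametrization of $A\{1\}$.
\item You do not even need the powers $K^\ell$: the similarity for $K$ itself already gives $i(K)=i(A|_{R(A)})$.
\end{itemize}
The paper's route buys brevity and a reusable transfer principle, which it immediately applies again to the other three expressions in Theorem \ref{DrazinInv}.

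\textbf{One small correction.} Your closing remark that the invariance of $\ker(AA^-)$ ``rests on $\ker(AA^-)=\ker(A^-)$ modulo $\ker A$'' is inaccurate and not needed. In fact $\ker(AA^-)$ is the preimage of $\ker A$ under $A^-$, and it can strictly contain $\ker A^-$: take $A=\diag(1,0)$ and $A^-=I$. Your Step 2 computation $A^{\ell+1}A^-z=A^\ell(AA^-z)=0$ settles the point on its own.
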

\proof 
Let $A^-$ and $A^=$ be two (possibly distinct) von Neumann inverses of $A$. Then
\beqn
i(A^2A^- + I-AA^-) &=& i(I+AA^=(A^2A^--AA^-))\\
&=& i(I+(A^2A^--AA^-)AA^=)\\
&=& i(A^2A^=+I-AA^=).
\eeqn
Hence, the Drazin index is independent of the choice of von Neumann inverse. 
\endproof

\begin{thm} \label{DrazinInv}
Let $A$ be a singular matrix. The following quantities are invariant under the choice of a von Neumann inverse $A^{-} $ of $A$, and all are equal to $i(A)-1$:
\begin{itemize}
    \item $i(A^2A^-+I-AA^-)$,
    \item $i(A+I-AA^-)$,
    \item $i(A^-A^2 +I- A^-A)$,
    \item $i(A+I-A^-A)$.
\end{itemize}
Moreover, $$A^D = \left( \left(A^2A^-+I-AA^-\right)^D\right)^2 A.$$

\end{thm}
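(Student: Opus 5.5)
The plan is to prove the first bullet directly, then transfer it to the other three through Proposition~\ref{jacobson}, and finally check the Drazin formula using Lemma~\ref{lemma2-7} and Cline's Lemma.

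\textbf{First quantity.} By the preceding proposition there is some $A^-\in A\{1\}$ with $i(A^2A^-+I-AA^-)=i(A)-1$. Theorem~\ref{lem:inv-Aminus} says this index does not depend on the choice of $A^-$. Hence $i(A^2A^-+I-AA^-)=i(A)-1$ for every $A^-\in A\{1\}$.

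\textbf{The other three quantities.} I will write each expression as $I-XY$ or $I-YX$ for suitable factors and apply Proposition~\ref{jacobson}, which gives $i(I-XY)=i(I-YX)$. First note that
\[
A^2A^-+I-AA^-=I-(I-A)AA^-.
\]
\begin{itemize}
\item Take $X=(I-A)A$ and $Y=A^-$. Then $I-YX=A^-A^2+I-A^-A$, which is the third quantity.
\item Take $X=I-A$ and $Y=AA^-$. Then $YX=AA^--AA^-A=AA^--A$, so $I-YX=A+I-AA^-$, which is the second quantity.
\item For the fourth, write $A^-A^2+I-A^-A=I-A^-A(I-A)$ and swap the factors. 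This gives $I-(I-A)A^-A=I-A^-A+AA^-A=A+I-A^-A$.
\end{itemize}
So all four indices equal $i(A)-1$. Invariance under the choice of $A^-$ is then automatic, because each index equals the first one, which is already invariant.

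\textbf{Drazin formula.} Put $S=A^2A^-+I-AA^-$. By Lemma~\ref{lemma2-7}(2),
\[
S^D=(A^2A^-)^D+I-AA^-.
\]
Lemma~\ref{lemma2-7}(2) assumes $A$ is not group invertible. When $i(A)=1$, the same decomposition still follows from Lemma~\ref{orthsum}, because the two summands are mutually orthogonal and $A$ is singular.

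Write $A^2A^-=A\cdot(AA^-)$. Cline's Lemma gives
\[
(A^2A^-)^D=A\bigl((AA^-A)^D\bigr)^2AA^-=A(A^D)^2AA^-=A^DAA^-.
\]
Two further identities will be used: $(I-AA^-)A^D=0$, since $A^D=A(A^D)^2$; and $AA^-A^D=A^D$. From these,
\[
S^DA=A^DAA^-A=A^DA,
\qquad
S^DA^D=A^DAA^-A^D=(A^D)^2.
\]
Therefore
\[
(S^D)^2A=S^D(A^DA)=(A^D)^2A=A^D.
\]

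The main obstacle is finding the right factorisations for Proposition~\ref{jacobson}. Once those are in place, the rest is routine manipulation with $AA^-A=A$.
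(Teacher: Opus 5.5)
Your proposal is correct and follows essentially the paper's route: the index equalities come from Proposition~\ref{jacobson} applied to suitable factorizations, combined with the existence result and Theorem~\ref{lem:inv-Aminus}, and the Drazin formula comes from Lemma~\ref{lemma2-7}(2) plus Cline's Lemma. Your factor pairs differ only cosmetically from the paper's chain, and your computation of $S^DA$ and $S^DA^D$ replaces the paper's squaring of $(A^2A^-)^D$. One small slip: when $i(A)=1$ the matrix $S$ is nonsingular, so Lemma~\ref{orthsum}, which assumes $X+Y$ singular, does not formally apply; the identity $S^D=(A^2A^-)^D+I-AA^-$ still holds, because the check of the three defining equations in that lemma's proof never uses singularity.
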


\proof 
Observe that 
\[
\begin{aligned}
i(I + AA^-(A - AA^-)) &= i(I + (A - AA^-)AA^-) \\
&= i(I + A(AA^- - A^-)) \\
&= i(I + (AA^- - A^-)A) \\
&= i(I + (A - A^-A)A^-A) \\
&= i(I + A^-A(A - A^-A)).
\end{aligned}
\]
This chain of equalities shows that all four expressions have the same Drazin index, independently of the choice of $A^-$. Since $i(A^2A^- + I - AA^-)=i(A)-1$, the result follows.

In order to obtain the expression for $A^D$, note that $(A^2A^-)^D=(A(AA^-))^D=A (A^D)^2AA^-$ using Lemma \ref{Cline}. Therefore, $\left( (A^2A^-)^D\right)^2= (A^D)^3A^2A^- = A^DA^-$. Then \beqn
\left( \left(A^2A^-+I-AA^-\right)^D\right)^2 A  &=& \left( \left((A^2A^-)^D\right)^2+I-AA^-\right)  A\\
&=& \left( \left(A^2A^- \right)^D\right)^2 A\\
&=& A\left(A^D\right)^3 A \\
&=&A^D
\eeqn
\endproof

\begin{thm}
Let $A$ be a singular matrix.  Then, for every $A^-\in A\{1\}$, 
%$$ \psi_A(\lb) = \lb \psi_{A^2A^-}(\lb) \text{ and } \psi_{A^2A^-+I-AA^-}(\lb)=  \lcm\{\lb\io\psi_A(\lb), \lb-1\}.$$
$$ \psi_{A^2A^-+I-AA^-}(\lb)=  \lcm\{\lb\io\psi_A(\lb), \lb-1\}.$$
If $i(A)>1$ then  $ \psi_A(\lb) = \lb \psi_{A^2A^-}(\lb)$, and $ \psi_A(\lb) = \psi_{A^2A^-}(\lb)$ if $i(A)=1.$

\end{thm}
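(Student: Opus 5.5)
The plan is to split $\F^n$ into the range of $A$ and a complementary subspace on which $A^2A^-+I-AA^-$ acts as the identity, and then read off minimal polynomials blockwise. Write $P=AA^-$, $X=A^2A^-=AP$ and $Y=I-P$. Since $PA=A$, the matrix $P$ is idempotent with $R(P)=R(A)$, so $\F^n=R(A)\oplus\ker P$.

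\textbf{Step 1: the block structure.} We have $XY=A^2A^-(I-AA^-)=A(A-AA^-A)A^-=0$, and $YX=(A-AA^-A)AA^-=0$; these are the orthogonality relations already used in Lemma \ref{lemma2-7}. Since $X(I-P)=0$, $X$ vanishes on $\ker P$. Its range lies in $R(A)$, so $X$ maps $R(A)$ into itself. On $R(A)$ the map $Y$ is $0$, and on $\ker P$ it is the identity. Hence, in a basis adapted to $R(A)\oplus\ker P$,
\[
X+Y\sim \mxx{cc}{X_1 & 0\\ 0 & I}, \qquad X\sim \mxx{cc}{X_1 & 0\\ 0 & 0},
\]
where $X_1$ denotes the restriction of $X$ to $R(A)$. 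Because $A$ is singular, $R(A)\neq\F^n$, so $\ker P\neq 0$ and the second diagonal block is genuinely present. It follows that
\[
\psi_{X+Y}=\lcm\{\psi_{X_1},\lambda-1\}, \qquad \psi_X=\lcm\{\psi_{X_1},\lambda\}.
\]

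\textbf{Step 2: identify $X_1$.} For $y=Ax\in R(A)$ we get $Xy=A^2A^-Ax=A^2x=Ay$. So $X_1$ is the restriction $A|_{R(A)}$. A polynomial $q$ annihilates $A|_{R(A)}$ if and only if $q(A)A=0$, that is, if and only if $\psi_A\mid \lambda q$. Since $A$ is singular, $\lambda\mid\psi_A$, so the monic minimal such $q$ is $\lambda^{-1}\psi_A$. Therefore $\psi_{X_1}=\lambda^{-1}\psi_A$, and Step 1 gives the first formula, $\psi_{A^2A^-+I-AA^-}=\lcm\{\lambda^{-1}\psi_A,\lambda-1\}$.

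\textbf{Step 3: the relation between $\psi_A$ and $\psi_{A^2A^-}$.} By Step 1, $\psi_{A^2A^-}=\lcm\{\lambda^{-1}\psi_A,\lambda\}$. If $i(A)>1$, then $\lambda^{-1}\psi_A$ is already divisible by $\lambda$, so $\psi_{A^2A^-}=\lambda^{-1}\psi_A$, i.e. $\psi_A=\lambda\psi_{A^2A^-}$. If $i(A)=1$, then $\lambda^{-1}\psi_A$ is coprime to $\lambda$, so the lcm equals $\psi_A$. This case split is consistent with Lemma \ref{indexAB}, applied to $X=A\cdot(AA^-)$ and $(AA^-)A=A$.

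The only delicate point is Step 2: one must check that $X$ really agrees with $A$ on $R(A)$, which uses $AA^-A=A$. One must also pin down the minimal polynomial of a restriction to the range, which needs the singularity of $A$ to guarantee $\lambda\mid\psi_A$. Everything else is routine block-diagonal bookkeeping.
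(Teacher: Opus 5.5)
Your proof is correct, but it takes a genuinely different route from the paper's.

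\textbf{Your route.} You use the splitting $\F^n=R(A)\oplus\ker(AA^-)$, which is available for every $A^-\in A\{1\}$ at once. On $R(A)$, the matrix $A^2A^-$ acts as $A$ and $I-AA^-$ acts as $0$; on $\ker(AA^-)$ they act as $0$ and $I$ respectively. Everything then reduces to the single observation $\psi_{A|_{R(A)}}=\lambda^{-1}\psi_A$. Independence from the choice of $A^-$ is automatic, because the final answer visibly does not involve $A^-$. When $A=0$ the summand $R(A)$ is the zero space, and you need the convention that its minimal polynomial is $1$; with that convention your formulas still come out right.

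\textbf{The paper's route.} The paper reduces to a core--nilpotent form $A\sim\diag(C,N)$ and computes with the particular inverse $\diag(C^{-1},N^-)$. There it uses Lemma~\ref{Npower} to obtain $\psi_{N^2N^-+I-NN^-}=\lambda^{k-1}(\lambda-1)$. It must then prove invariance under a change of inverse $A^-\to A^{=}$ separately:
\begin{itemize}
\item for $\psi_{A^2A^-}$, via the parametrization $A^{=}=A^-+Z-A^-AZAA^-$ of $A\{1\}$ and a block-triangular computation;
\item for $\psi_{A^2A^-+I-AA^-}$, via an annihilating-polynomial transfer: right-multiply by $AA^{=}$ and use $I-AA^{=}=(I-AZ)(I-AA^-)$.
\end{itemize}

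\textbf{Comparison.} Your argument is shorter and avoids both the core--nilpotent decomposition and the description of $A\{1\}$. It also explains conceptually why the result holds. It immediately gives $i(A^2A^-+I-AA^-)=i(A)-1$ for every $A^-$, which recovers Theorem~\ref{lem:inv-Aminus} and the first item of Theorem~\ref{DrazinInv}. The paper's computation, in exchange, produces explicit block forms of $A^2A^{=}$ relative to the core--nilpotent decomposition. That fits the block-matrix style the paper uses later.
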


\proof
Since similar matrices have the same  minimal polynomial, we can consider, without loss of generality, that $A=\bmx C & 0\\ 0 & N\emx$, where $C$ is nonsingular and $N^k=0\ne N^{k-1}$, for some natural $k$. Therefore, $\psi_A(\lb) = \lb^k \psi_C(\lb)$, with $gcd(\lb, \psi_C)=1$, and $k=i(A)$.

%{\color{blue} Firstly we prove $\psi_{A^2A^-}(\lb)$ is invariant under the choice of $A^-\in A\{1\}$. }

 Let $N^-\in N\{1\}$. Since $\psi_N(\lb)=\lb^k$ then, using Lemma \ref{Npower}, we have $\psi_{N^2N^-}(\lb) = \lb^{k-1}$  if $k>1$, and $\psi_{N^2N^-}(\lb) = \lb$ if $k=1$. Taking $A^- = \bmx C\io & 0\\ 0 & N^-\emx$ we have $A^2 A^- = \bmx C & 0\\ 0 & N^2N^-\emx$ which implies that,  if $k>1$, that $\psi_{A^2A^-}(\lb) = \lb^{k-1} \psi_C(\lb)$, which implies, $\psi_A(\lb)=\lb \psi_{A^2A^-}(\lb)$, and $\psi_A(\lb)= \psi_{A^2A^-}(\lb)$ if $k=1$.
Also, $A^2A^-+I-AA^- = \bmx C & 0 \\ 0 & N^2N^-+I-NN^-\emx$, which gives $$\psi_{A^2A^-+I-AA^-}(\lb) = \lcm\{\psi_C(\lb), \psi_{N^2N^-+I-NN^-}(\lb)\}.$$

We claim that $\psi_{N^2N^-+I-NN^-}(\lb) = \lb^{k-1}(\lb-1)$. This is obvious when  $k=1$. Suppose now $k\ge 2$. Using Lemma \ref{Npower}, we have $(N^2N^-+I-NN^-)^{k-1}NN^-=(N^2N^-)^{k-1}NN^-=0$, and also $(N^2N^-)^{k-2}\ne 0$. Note that $(N^2N^-+I-NN^-)^\ell NN^-=(N^2N^-)^\ell$ which is zero if and only if $\ell \ge k-1$. That is, $\lb^\ell(\lb-1)$ is an annihilating polynomial for $N^2 N^-+I-NN^-$ if and only if $\ell\ge k-1$. Therefore, the minimal polynomial is $\lb^{k-1}(\lb-1)$ as desired.

We now prove that the minimal polynomial $\psi_{A^2A^-}(\lb)$ and  $\psi_{A^2A^-+I-AA^-}(\lb)$ are invariant under the choice of $A^-\in A\{1\}$. Let $A^=\in A\{1\}$ arbitrary. From \cite[Corollary 1, p.52]{Adi}, we know there exists $Z=\bmx Z_1 & Z_2\\Z_3 & Z_4\emx$ such that $A^= = A^-+Z-A^-AZAA^-$, and consequently
$$A^2A^= = \bmx C & C^2Z_2(I-NN^-)\\ 0 & N^2N^-+N^2Z_4(I-NN^-)\emx.$$  Therefore, $\psi_{A^2A^=}(\lb) = \lcm\{\psi_C(\lb), \psi_{N^2N^-+N^2Z_4(I-NN^-)}(\lb)\}$ since the (2,2) block is nilpotent and the (1,1) block is invertible. 

We will now prove that $\psi_{N^2N^-+N^2Z_4(I-NN^-)}(\lb) = \psi_{N^2N^-}(\lb)$. If $N=0$, that is, $i(A)=1$, there is nothing left to prove. Suppose now $k\ge 2$. By induction, one can show that 
$$(N^2N^-+N^2Z_4(I-NN^-))^\ell  = N^{\ell+1}N^-+N^{\ell+1}Z_4(I-NN^-).$$ This means $\psi_{N^2N^-}(\lb)$ is a monic annihilating polynomial for $N^2N^-+N^2Z_4(I-NN^-)$. If $\lb^{k-2}$ was to be a monic annihilating polynomial for $N^2N^-+N^2Z_4(I-NN^-)$ then
$$(N^2N^-+N^2Z_4(I-NN^-))^{k-2} = N^{k-1}N^-+N^{k-1}Z_4(I-NN^-)=0$$ would imply, post-multiplying by $N$, that $N^{k-1}=0$ which we assumed to be nonzero. We obtain, therefore, $\psi_{A^2A^=}(\lb) = \lb^{k-1} \psi_C(\lb)$, for any $A^= \in A\{1\}$.

For the invariance of  $\psi_{A^2A^=+I-AA^=}(\lb) $ under the choice of $A^=\in A\{1\}$, 
\iffalse we have 
$$A^2A^= + I-AA^==\bmx C & C^2Z_2(I-NN^-)-CZ_2(I-NN^-)\\
0 & X+Y\emx, $$ 
where $X= N^2N^-+I-NN^-$ and $Y= (N^2-N)Z_4(I-NN^-)$. Note that $Y^2 = 0$, $XY=NY$, $YX=Y$ and $X^\ell = N^{\ell+1}N^-+I-NN^-$. Furthermore,
$$(X+Y)^\ell = N^{\ell+1}N^- + I-NN^- + (N^{\ell+1}-N)Z_4(I-NN^-).$$ 
We now show that $\lb^{k-1}(\lb -1)$ is an annihilating polynomial for $X+Y$. Indeed, and since $N^k=0$,
\begin{eqnarray*}
(X+Y)^{k-1}(X+Y-I) &=& (N^k+I-NN^- + (N^k-N)Z_4(I-NN^-))\times \\
&\times& (N^2N^--NN^-+(N^2-N)Z_4(I-NN^-))\\
&=& (I-NZ_4)(I-NN^-)N(NN^-N^-+(N-I)Z_4(I-NN^-))\\
&=& 0
\end{eqnarray*}

Suppose now $X+Y$ is nilpotent, that is, there exists $\ell$ such that $(X+Y)^\ell = 0$. If that was the case, and since we can write $$ 0 = (X+Y)^\ell = I-N(I-N^\ell)(N^-+Z_4(I-NN^-))$$
then 
$$ N((I-N^\ell)(N^-+Z_4(I-NN^-)) =I$$ and $N$ would be invertible, which cannot be. Therefore $\psi_{X+Y}(\lb)\not| \, \lb^\ell$ for any $\ell $.

We are left to show $\lb^{k-2}(\lb-1)$ is not as annihilating polynomial for $X+Y$. If that was the case,
\begin{eqnarray*}
0 &=& (X+Y)^{k-2}(X+Y-I)\\
&=& -N^{k-1}(N^-+Z_4(I-NN^-))
\end{eqnarray*}
which would lead to $N^{k-1}N^- = -N^{k-1}Z_4(I-NN^-)$. Post-multiplying by $N$ gives $N^{k-1}=0$ which cannot be.
\fi
write $$\psi_{A^2A^-+I-AA^-}(\lb)=\lb^n+\ga_{n-1}\lb^{n-1}+\dots + \ga_1\lb+\ga_0.$$ Since $\left(A^2A^-+I-AA^-\right)^\ell = A^{\ell+1}A^-+I-AA^-$ from Lemma \ref{lemma2-7}, we have the equality
\begin{equation}\label{eq_min}
A^{n+1}A^-+\ga_{n-1}A^nA^-+\dots +\ga_1 A^2A^-+\ga_0 I + (I-AA^-)+ \sum_{j=1}^{n-1} \ga_j (I-AA^-) =0.% \ga_{n-1}(I-AA^-)+\dots +\ga_1(I-AA^-) =0.
\end{equation}
Given $A^=\in A\{1\}$, and multiplying (\ref{eq_min}) on the right hand side by $AA^=$, we have the equality
\begin{equation}\label{eq_min2}
A^{n+1}A^=+\ga_{n-1} A^nA^=+\dots +\ga_1A^2A^=+\ga_0 AA^= = 0,
\end{equation}
and in particular $A^{n+1}A^-+\ga_{n-1}A^nA^-+\dots +\ga_1A^2A^-+\ga_0 AA^- =0$. The latter together with (\ref{eq_min}) imply that 
\begin{equation}\label{eq_min3}
(I-AA^-) + \ga_{n-1}(I-AA^-) + \dots + \ga_1(I-AA^-) + \ga_0(I-AA^-) =0.
\end{equation}
Again using \cite[Corollary 1, p.52]{Adi}, we know there exists $Z$ such that $I-AA^==(I-AZ)(I-AA^-)$. Multiplying the equality (\ref{eq_min3}) by $I-AZ$ on the left hand side, we obtain
\begin{equation}\label{eq_min4}
(I-AA^=) + \ga_{n-1}(I-AA^=) + \dots + \ga_1(I-AA^=)+\ga_0(I-AA^=)=0.
\end{equation}
Adding (\ref{eq_min2}) to (\ref{eq_min4}) we conclude $\psi_{A^2A^-+I-AA^-}(\lb)$ is an annihilating polynomial of $A^2A^=+I-AA^=$, and therefore $\psi_{A^2A^=+I-AA^=}(\lb)\mid \psi_{A^2A^-+I-AA}(\lb)$. Reversing the roles of $A^-$ and $A^=$, we obtain $\psi_{A^2A^=+I-AA^=}(\lb)= \psi_{A^2A^-+I-AA}(\lb)$.

So, for every choice of $A^-\in A\{1\}$, we have $$\psi_{A^2A^-+I-AA^-}(\lb)=  \lcm\{\lb^{-k}\psi_A(\lb), \lb^{k-1}(\lb-1)\} =  \lcm\{\lb\io\psi_A(\lb), \lb-1\}.$$  \endproof

As an example, consider $A=\bmx C & 0\\ 0 & N\emx = \left[\begin{array}{rr|rrr}
-2 & 1 & 0 & 0 & 0 \\
0 & -2 & 0 & 0 & 0 \\
\hline
 0 & 0 & 0 & 1 & 0 \\
0 & 0 & 0 & 0 & 1 \\
0 & 0 & 0 & 0 & 0
\end{array}\right] $ with $A^- = \bmx C\io & 0\\ 0 & N^T\emx$, which gives $ A^2A^- +I-AA^- = \left[\begin{array}{rr|rrr}
-2 & 1 & 0 & 0 & 0 \\
0 & -2 & 0 & 0 & 0 \\ \hline
0 & 0 & 0 & 1 & 0 \\
0 & 0 & 0 & 0 & 0 \\
0 & 0 & 0 & 0 & 1
\end{array}\right]$. We obtain $\psi_A(\lb) = (\lb + 2)^{2} \cdot \lb^{3} $ and $\psi_{A^2A^-+I-AA^-}(\lb) = (\lb - 1) \cdot \lb^{2} \cdot (\lb + 2)^{2}$.

\section{The index of an anti-triangular matrix}

In this section, we draw our attention to the Drazin index (and the expression of the Drazin inverse) of a block matrix of the form $  M=\mxl{cc} A & B \\  C & 0 \mxr $, where $A\in \F^{n\xx n}, B\in \F^{n\xx m},  C\in \F^{m\xx n}$ and the zero block is $m\xx m$. To the authors' knowledge, a general formula for the Drazin inverse of such a block matrix is not known, let alone tighter bounds for its index. We will use constraints on the blocks  in order to obtain  tractable  bounds on the index of $M$ related to the indices of its blocks.

\iffalse 
We firstly revisit  a special case concerning   group invertibility \cite[Corollary 2.2]{Group220}.

\begin{prop}
The block matrix $\mxl{cc} A & I \\  C & 0 \mxr $ is group invertible if and only if $C-A(I-C^-C)$ is a nonsingular matrix, for one and hence all choices of $C^-\in C\{1\}$.
\end {prop}

\fi

{ 
\begin{thm}\label{MainTh}
Let $M=\mxl{cc} A & B \\  C & 0 \mxr$  be a singular block matrix, and assume that both $A$ and $BC$ are singular matrices. %  where $A$ and $BC$ are singular square matrices over a field.

\ben 

\item $M$ is group invertible if and only if $$A(I-C^-C)-BC+(I-ZZ^-)(I-BB^-)(I+AC^-C -C^-C)$$ is nonsingular, for one and hence all choices of $B^-\in B\{1\}$, $C^-\in C\{1\}$,  $Z = (I-BB^-)A(I-C^-C)$, $Z^-\in Z\{1\}$.

\item $i(M)=2$ if and only if $i(M)>1$ %$M$ is not group invertible
 and $BC-\left(I-BC(BC)^-\right)A$ is nonsingular, for one and hence all choices of  $(BC)^-\in (BC)\{1\}$.
 
 \een

%Otherwise,
For $i(M)>2$:

\ben

\item[3.] If $ABC=BCA=0$ then  $$     M^D = 
    \mxl{cc}
    A^D & (A^D)^2 B + B (CB)^D \\
    C (A^D)^2 + (CB)^D C & C (A^D)^3 B
    \mxr.
$$

Moreover,
\ben
\item If $i(A)\ne 2 i(BC)-1$ or $i(A)=1$ or $i(BC)=1$ then
$$
    \max\{i(A),\, 2i(BC) - 1\} \;\le\; i(M) \;\le\; \max\{i(A),\, 2i(BC) - 1\} + 2.$$ 
\item If $i(A)>1$ and $i(BC)>1$ and $i(A)=2i(BC)-1$ then
$$
 i(A) - 1  \;\le\; i(M) \;\le\;  i(A)  + 2.$$ 
\een

\item[4.]  If   $ABC=0$ then 
$$\max\{i(A), 2i(BC)-1\} -1 \leq i(M) \leq i(A)+2i(BC)+2$$ and
 $$    M^D = \mxl{cc} A & I \\  C & 0 \mxr
    \mxl{cc} G_1 & G_2 \\  G_3 & G_4 \mxr^2
    \mxl{cc} I & 0 \\  0 & B \mxr,
$$
where 
\begin{eqnarray*}
W &=& BC\\
\ell & = & \max\{i(A),\,2i(W)-1\}-1\\
G_1 &=& (I-WW^D)(I+\alpha)A^D+W^D(I+\gamma)(I-AA^D)A \\
G_2 &=& (I-WW^D)(I+\alpha)(A^D)^2+W^D(I+\gamma)(I-AA^D)\\
G_3 &=& (I-WW^D)\beta A^D+W^D\delta (I-AA^D)A-WW^DAA^D +WW^D\\
G_4 &=& (I-WW^D)\beta (A^D)^2+W^D\delta (I-AA^D)-WW^DA^D\\
\alpha &=& \sum_{\substack{1\leq n\leq \ell\\ n\ \mathrm{even}}}
W^{\frac{n}{2}}(A^{D})^{n}\\
\beta &=&\sum_{\substack{1\leq n\leq \ell\\ n\ \mathrm{odd}}}
W^{\frac{n+1}{2}}(A^{D})^{n}\\
\gamma &=& \sum_{\substack{1\leq n\leq \ell\\ n\ \mathrm{even}}}
(W^{D})^{\frac{n}{2}}A^{n}\\
\delta &=& \sum_{\substack{1\leq n\leq \ell\\ n\ \mathrm{odd}}}
(W^{D})^{\frac{n+1}{2}}A^{n},
\end{eqnarray*}
where, as usual, a sum is understood to be zero whenever its index set is
empty. More generally, the same formulas hold if $\ell$ is replaced
by any integer satisfying $\ell\geq \max\{i(A),\,2i(W)-1\}-1$.
\een

\end{thm}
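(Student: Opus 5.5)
The common starting point is the factorization
\[
M=\mxl{cc} A & I \\ C & 0 \mxr \mxl{cc} I & 0 \\ 0 & B \mxr =: PQ ,
\qquad
N:=QP=\mxl{cc} A & I \\ BC & 0 \mxr = \mxl{cc} A & I \\ W & 0 \mxr ,
\]
with $W=BC$. By Lemma \ref{indexAB} and Cline's Lemma \ref{Cline} we get $|i(M)-i(N)|\le 1$ and $M^D=P(N^D)^2Q$; this is exactly the shape of the formula in item 4. The plan is to analyse $N$, which has an identity block, and then transfer everything back to $M$.

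For items 1 and 2 I would not go through $N$. For item 1, $M$ is group invertible iff $\rank M=\rank M^2$, equivalently iff $M+I-MM^-$ is nonsingular for one (hence every) $M^-$, by Theorem \ref{DrazinInv} with $i(M)\le1$. I would build $M^-$ by a Schur-type reduction. Using row and column operations with $B^-$ and $C^-$, split $A$ into the piece $Z=(I-BB^-)A(I-C^-C)$, which lives off the ranges of $B$ and $C$, and the remainder. With $Z^-$ this gives an explicit $M^-$. The determinant of $M+I-MM^-$ then reduces, after eliminating the invertible parts, to the determinant of the displayed expression. Invariance under the choice of inverses follows as in Theorem \ref{lem:inv-Aminus}.

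For item 2, $i(M)\le 2$ means $M^2$ is group invertible. I would write
\[
M^2=\mxl{cc} A & B \\ C & 0\mxr\mxl{cc} A & B \\ C & 0\mxr ,
\]
apply Cline's Lemma to move to the product $QP$ in the other order, and use the criterion $X+I-XX^-$ nonsingular. After elimination this should reduce to nonsingularity of $BC-(I-BC(BC)^-)A$. I expect this step to be delicate but finite bookkeeping.

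For item 3, $AW=WA=0$. Decompose $N=X+Y$ with $X=\mxl{cc}A&0\\0&0\mxr$ and $Y=\mxl{cc}0&I\\W&0\mxr$. These are not mutually orthogonal, so I would instead pass to the idempotent splitting $AA^D$, $I-AA^D$ and $WW^D$. The cleaner route is to check the displayed $M^D$ directly against the three Drazin equations, using $AW=WA=0$ (hence $A^DW=WA^D=0$ and $A^DB(CB)^D C=0$) and $C(A^D)^kB$-type cancellations. For the index bounds, $Y$ has the form of Lemma \ref{Y_from_W}, essentially with $W^-=I$ after a similarity, so $i(Y)=2i(W)-1$. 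Lemma \ref{orthsum} then gives $\max\{i(A),2i(W)-1\}$ with the case split exactly as in (a) and (b), and its item 2 produces the $-1$ in (b). The $+1$ from Cline, applied from $N$ back to $M$, together with the correction coming from $X$ and $Y$ being only nearly orthogonal ($XY\neq0$ in the off-diagonal), gives the $+2$. Making this correction precise, by showing the rank sequence of $N$ stabilises at most one step after that of $X+Y$ restricted appropriately, is the step I expect to be hardest.

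For item 4, only $AW=0$ holds. I would compute $N^n$ by splitting it into parts in the ranges of $AA^D$ and $WW^D$. The Neumann-like sums $\alpha,\beta,\gamma,\delta$ arise from expanding $(X+Y)^n$ with $AW=0$, truncated at $\ell$ because $A^n(I-AA^D)=0$ and $W^n(I-WW^D)=0$ for large $n$. I would verify that $G=\mxl{cc}G_1&G_2\\G_3&G_4\mxr$ satisfies the Drazin equations for $N$, and the formula then follows from Cline's Lemma. Since $X$ and $Y$ are now only one-sidedly orthogonal, the index bound comes from the triangular-type estimate $i(N)\le i(X)+i(Y)+1$, in the Hartwig–Shoaf/Meyer–Rose style. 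Adding one for Cline gives $i(A)+2i(W)+2$, and the lower bound follows as in item 3.
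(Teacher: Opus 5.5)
\textbf{There is a genuine gap in items 3 and 4.} Your splitting $N=X+Y$ with $Y=\bmx 0 & I\\ W & 0\emx$ does not have the properties you need.

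First, $i(Y)=2i(W)$, not $2i(W)-1$. Since $Y^{2k}=\diag(W^k,W^k)$ and $Y^{2k+1}=\bmx 0 & W^k\\ W^{k+1} & 0\emx$, the ranks are $2r_k$ and $r_k+r_{k+1}$ with $r_k=\rank W^k$. These first stabilise at exponent $2i(W)$. So $Y$ is not similar to the matrix of Lemma~\ref{Y_from_W}; for $W=0$ the two indices are $2$ and $1$.

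Second, $XY=\bmx 0 & A\\ 0 & 0\emx$, which vanishes only if $A=0$, while $YX=\bmx 0 & 0\\ WA & 0\emx$.
\begin{itemize}
\item Under $ABC=BCA=0$ you only get $YX=0$. Lemma~\ref{orthsum} therefore does not apply, and the block-triangular estimate that one-sided orthogonality allows gives a much wider window than the stated one.
\item Under $ABC=0$ alone neither product vanishes. So neither the estimate $i(N)\le i(X)+i(Y)+1$ nor the additive Drazin-inverse formula for a sum $P+Q$ with $PQ=0$ is available. Your claim of one-sided orthogonality in item 4 has it backwards.
\end{itemize}

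The ``near-orthogonality correction'' you single out as the hardest step is exactly the missing idea. It comes from Theorem~\ref{DrazinInv} applied to $\Gamma=N$ itself. Choosing $\Gamma^-$ with $\Gamma\Gamma^-=\diag(I,WW^-)$ gives
\[
\Omega=\Gamma^2\Gamma^-+I-\Gamma\Gamma^-=\bmx A & WW^-\\ W & I-WW^-\emx,\qquad i(\Gamma)=i(\Omega)+1,\qquad \Gamma^D=(\Omega^D)^2\Gamma .
\]
Write $\Omega=X'+Y'+E$ with $X'=\diag(A,0)$, $Y'=\bmx 0 & WW^-\\ W & 0\emx$ and $E=\diag(0,I-WW^-)$. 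Then:
\begin{itemize}
\item $E$ is orthogonal to $X'+Y'$.
\item $X'Y'=\bmx 0 & AWW^-\\ 0 & 0\emx$ and $Y'X'=\bmx 0 & 0\\ WA & 0\emx$, so $ABC=0$ kills the first product and $BCA=0$ the second. Replacing the block $I$ by $WW^-$ is what creates the orthogonality.
\item Lemma~\ref{Y_from_W} now gives $i(Y')=2i(W)-1$.
\end{itemize}
In item 3, Lemma~\ref{orthsum} pins down $i(\Omega)$: exactly in case (a), up to one in case (b). Then $i(M)\in[i(\Gamma)-1,i(\Gamma)+1]=[i(\Omega),i(\Omega)+2]$ gives both bounds at once. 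In item 4, $X'Y'=0$ gives the block-triangular estimate for $X'+Y'$ and the additive formula for $(X'+Y')^D$. Its truncated sums produce $\alpha,\beta,\gamma,\delta$ and $G=\Gamma^D$.

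Checking the item-3 formula for $M^D$ directly against the Drazin equations is a legitimate alternative. It does not, however, yield the index bounds.

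\textbf{Item 2 cannot be done as you plan.} Cline's lemma applied to $M^2=P(QPQ)$ determines $i(M^2)$ only up to $\pm1$, so it cannot produce an exact criterion. In fact the ``only if'' half of item 2 is false. Take $1\times1$ blocks $A=0$, $B=1$, $C=0$. Then $M=\bmx 0 & 1\\ 0 & 0\emx$ is singular, $A$ and $BC$ are singular, and $i(M)=2$, yet $BC-(I-BC(BC)^-)A=0$.

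What can be proved is the ``if'' half, and that is all the paper's argument establishes. It does go through $N$. $\Gamma$ is group invertible iff $\Omega$ is nonsingular. Moreover,
\[
\bmx I & I\\ 0 & I\emx\Omega\bmx 0 & I\\ I & 0\emx=\bmx I & A+W\\ I-WW^- & W\emx ,
\]
whose Schur complement is $W-(I-WW^-)A$ (Lemma~\ref{schurcomp}). So nonsingularity of that matrix gives $i(\Gamma)=1$, hence $i(M)\le 2$.

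Your item-1 sketch is plausible but not carried out; the paper simply cites the known result.
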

\proof

The equivalence (1) was proved in \cite[Theorem 2.1]{Group220}.

Before we address the remaining items of the theorem, we start by considering the factorization 
\begin{equation} \label{Dls}
    M=\mxl{cc} A & B \\   C & 0 \mxr = \mxl{cc} A & I \\   C & 0 \mxr
    \mxl{cc} I & 0 \\   0 & B \mxr = SR,
\end{equation} and we assume both $BC$ and $A$ to be singular.  

Let $\Gamma =RS=\mxl{cc} A & I \\   BC & 0 \mxr=\mxl{cc} A & I \\  W & 0 \mxr$, with $W=BC$. Applying Lemma \ref{Cline}, 
\begin{equation}
\label{MDm}
    M^D=S((\Gamma)^D)^2R  
\end{equation} from which 
\begin{equation*}
    M^D = \mxl{cc} A & I \\   C & 0 \mxr
    \left(\mxl{cc} A & I \\  BC & 0 \mxr^D\right)^2
    \mxl{cc} I & 0 \\  0 & B \mxr \, \text{ and } \, |i(M)-i(\Gamma
)|\leq 1.
\end{equation*}

Since $BC$ is singular, then $\Gamma$ is singular. Using  Theorem  \ref{DrazinInv} we obtain  
$i(\Gamma)=i(\Omega)+1$, where $\Omega=\Gamma^2\Gamma^{-}+I- \Gamma \Gamma^{-}$, with
$\Gamma^D=[\Omega^D]^2\Gamma$.

We now write  
$$\Gamma=\mxl{cc} A & I \\  W & 0 \mxr = \mxl{cc} I & A \\  0 & W \mxr
\mxl{cc} 0 & I \\  I & 0 \mxr = TP$$ and factor 
$$T=\mxl{cc} I & A \\  0 & W \mxr=\mxl{cc} I & 0 \\  0 & W \mxr
\mxl{cc} I & A \\  0 & I \mxr=DQ,$$ which gives 
$$ \Gamma \Gamma^{-}=DQPP^{-1}Q^{-1}D^{-}=DD^{-}=\mxl{cc} I & 0 \\  0 & WW^{-} \mxr.$$
We therefore obtain 
$$
\Omega =\mxl{cc} A & I \\  W & 0 \mxr
\mxl{cc} I & 0 \\  0 & WW^{-} \mxr +
\mxl{cc} 0 & 0 \\  0 & I-WW^{-} \mxr = \mxl{cc} A & WW^{-} \\  W & I-WW^{-} \mxr.
$$ 
Applying Theorem \ref{DrazinInv}, we know $\Gamma$ is group invertible precisely when $\Omega$ is nonsingular. Since $\bmx I & I\\ 0 & I\emx \Omega \bmx 0 & I\\ I & 0\emx = \bmx I & A+W\\ I-WW^- & W\emx$, this occurs exactly when $BC-\left(I-BC(BC)^-\right)A$ is nonsingular, from Lemma  \ref{schurcomp}. So, and since $i(M)>1$ and $i(\Gamma)=1$% $BC$ is singular, and therefore $i(M)\ne0$, and $M$ is not group invertible
, we necessarily have $i(M)=2$.

\

We now address the remaining cases of the theorem in which $i(M)>2$, and therefore assume $\Omega$ is singular.

Note that $\Omega =    \mxl{cc} A & WW^{-} \\  W & 0 \mxr + \mxl{cc} 0 & 0 \\  0 & I-WW^{-} \mxr$ is an orthogonal sum. Then
$$
\Omega^D  =\mxl{cc} A & WW^{-} \\  W & 0 \mxr^D + \mxl{cc} 0 & 0 \\  0 & I-WW^{-} \mxr,
$$ as $\mxl{cc} 0 & 0 \\  0 & I-WW^{-} \mxr$ is idempotent. Since 
$i(\Omega)=\max\left\{ i\left( \mxl{cc} A & WW^{-} \\  W & 0 \mxr \right), i\left( \mxl{cc} 0 & 0 \\  0 & I-WW^{-}\mxr\right) \right\}$ and  $W$ is not invertible then  $i(\Omega)= i\left(\mxl{cc} A & WW^{-} \\  W & 0 \mxr\right)$. 

Concerning the index of $\Omega$,     note that  $$ \mxl{cc} A & WW^{-} \\  W & 0 \mxr=\mxl{cc} A & 0 \\  0 & 0 \mxr + \mxl{cc} 0 & WW^{-} \\  W & 0 \mxr=X+Y.$$ 
 
Since $A$ is singular then $i(X)=i(A)$ with $ X^D=\bmx A^D & 0\\ 0 & 0\emx$. Note that if   $A$ were nonsingular then $i(X)=1=i(A)+1$ and $X^\# = \bmx A^{-1} & 0\\ 0 & 0\emx$.
  Moreover $Y^D=\mxl{cc} 0 & W^DWW^{-} \\ WW^D & 0 \mxr$ and $i(Y)=2i(W)-1$, from Lemma \ref{Y_from_W}.  

\

We are left to examine statements 3. and 4. of the theorem.

\ben 
\item[3.]
The equality $ABC=0=BCA$ is equivalent to  $XY=0=YX$ as 
$XY=\mxl{cc} 0 & AWW^{-} \\  0 & 0 \mxr$ and $YX=\mxl{cc}0 & 0 \\ WA & 0 \mxr$.
 
 In this case, $\Omega^D = (X+Y)^D = X^D+ Y^D$, that is,  $$\mxl{cc} A & WW^{-} \\  W & 0 \mxr^D=\mxl{cc} A^D & 0 \\   0 & 0 \mxr + \mxl{cc} 0 & W^DWW^{-} \\ WW^D & 0 \mxr . $$

 \ben
 \item[(a)] If $i(A)\ne 2 i(BC)-1$ or $i(A)=1$ or $i(BC)=1$ then $$i(\Omega) = i(X+Y)=\max\{ i(A), 2i(W)-1\}$$ by Lemma \ref{orthsum}(1). Therefore,   $$i(\Gamma)=i(\Omega)+1=i(X+Y)+1 = \max\{i(A), 2i(W)-1\}+1,$$ and   since  $|i(M)-i(\Gamma)|\leq 1$ we obtain $$ \max\{i(A),2i(W)-1\}\leq i(M) \leq \max\{i(A), 2i(W)-1\}+2.$$ 
    
\item[(b)]  If $i(A)>1$ and $i(BC)>1$ and $i(A)=2i(BC)-1$ then $$i(A) -1 \le i(\Omega) = i(X+Y)\le   i(A)$$ by Lemma \ref{orthsum}(2). Therefore, %  $i(\Gamma)=i(\Omega)+1=i(X+Y)+1 = \max\{i(A), 2i(W)-1\}+1$, and   since  $|i(M)-i(\Gamma)|\leq 1$ we obtain $ \max\{i(A),2i(W)-1\}\leq i(M) \leq \max\{i(A), 2i(W)-1\}+2$. 
$i(A)\le i(\Gamma) \le i(A)+1$ which imply $$  i(A)-1\le i(M)\le i(A)+2.$$
%$$  i(A) - 1  \;\le\; i(M) \;\le\;  i(A)  + 2.$$ 
   \een
 
Let us now proceed to compute the Drazin inverse of $M$. 
From Lemma \ref{Y_from_W},   $Y^D=\mxl{cc} 0 & W^DWW^{-} \\ WW^D & 0 \mxr$. Also, $X^D=\mxl{cc} A & 0 \\ 0 & 0 \mxr^D=\mxl{cc} A^D& 0 \\ 0 & 0 \mxr.$ Therefore, $$\Omega^D=\mxl{cc} A^D & 0 \\ 0 & 0 \mxr+\mxl{cc} 0 & W^DWW^{-} \\ WW^D & 0\mxr+\mxl{cc} 0 & 0 \\ 0 & I-WW^{-}\mxr = \mxl{cc} A^D & W^DWW^{-} \\ WW^D & I-WW^{-}\mxr.$$

From Theorem \ref{DrazinInv}, $\Gamma^D=(\Omega^D)^2\Gamma$, that is,

\begin{eqnarray*}
    \Gamma^D &=& \mxl{cc} A^D & W^DWW^{-} \\ WW^D & I-WW^{-}\mxr \mxl{cc} A^D & W^DWW^{-} \\ WW^D & I-WW^{-}\mxr \mxl{cc} A & I \\ W & 0 \mxr \\
    &=& \mxl{cc} (A^D)^2 + W^DWW^{-}WW^D & A^DW^DWW^{-}+W^DWW^{-}-W^DWW^{-}WW^{-} \\ 
    WW^DA^D+WW^D-WW^{-}WW^D & WW^DW^DW^{-}+I-WW^{-}\mxr \times \\
    &\times& \mxl{cc} A & I \\ W & 0 \mxr \\
    &=& \mxl{cc} (A^D)^2 + W^D & 0 \\ 0 & W^DWW^{-}+I-WW^{-}\mxr \mxl{cc} A & I \\ W & 0 \mxr \\
    &= &\mxl{cc} (A^D)^2A + W^DA & (A^D)^2 + W^D \\ W^DWW^{-}W+W-WW^{-}W & 0 \mxr \\
    &=& \mxl{cc} A^DAA^D + W^DA & (A^D)^2 + W^D \\ W^DW & 0 \mxr \\
    &=& \mxl{cc} A^D & (A^D)^2 + W^D \\ W^DW & 0 \mxr.
\end{eqnarray*} In the above, we use the fact that $AW=0$ implies $(A^D)^2AW(W^D)^2=0,$  which in turn means $  A^DAA^DW^DWW^D=0$; that is,  $A^DW^D=0.$ Subsequently,

\begin{align*}
    (\Gamma^D)^2 &= \mxl{cc} A^D  & (A^D)^2 + W^D \\ W^DW & 0 \mxr\mxl{cc} A^D & (A^D)^2 + W^D \\ W^DW & 0 \mxr \\
    &= \mxl{cc} (A^D)^2+(A^D)^2W^DW+(W^D)^2W  & (A^D)^3 + A^DW^D \\ W^DWA^D & W^DW(A^D)^2+W^DWW^D \mxr \\
    &= \mxl{cc} (A^D)^2+W^D  & (A^D)^3 \\ 0 & W^D \mxr. 
\end{align*}
$\textrm{since} \quad A^DW^D=0 \quad \textrm{and} \quad WA=0$ which implies that $WA(A^D)^2=0(A^D)^2, \quad \textrm{and} \quad WA^D=0.$ 

In order to compute $M^D=\mxl{cc} A & I \\  C & 0 \mxr (\Gamma^D)^2 \mxl{cc} I & 0 \\ 0 & B \mxr,$ we have

\begin{align*}
    M^D &=\mxl{cc} A & I \\ C & 0 \mxr \mxl{cc} (A^D)^2+W^D  & (A^D)^3 \\ 0 & W^D \mxr \mxl{cc} I & 0 \\ 0 & B \mxr \\
    &= \mxl{cc} A(A^D)^2+AW^D  & A(A^D)^3+W^D \\ C(A^D)^2+CW^D & C(A^D)^3 \mxr \mxl{cc} I & 0 \\ 0 & B \mxr \\
    &= \mxl{cc} A^D  & (A^D)^2B+B(CB)^D \\ C(A^D)^2+(CB)^DC & C(A^D)^3B \mxr.
\end{align*}

Note that    $W^D = (BC)^D = B\left((CB)^D\right)^2C$ by applying Lemma \ref{Cline}. This implies $W^D B = B\left((CB)^D\right)^2 CB = B(CB)^D (CB)^D CB = B(CB)^D,$ 
and  $CW^D = CB\left((CB)^D\right)^2 C = CB(CB)^D (CB)^D C = (CB)^DC.$

\item[4.] We now turn to the case $AW=ABC=0$, or equivalently, $XY=0$.

Applying  Lemma \ref{Cline} to   $X+Y=\bmx Y & I \emx \mxl{c} I \\ X \mxr,$ we obtain  $$(X+Y)^D=\bmx Y & I \emx \left(\mxl{cc} Y & I \\ 0 & X \mxr^D \right)^2 \mxl{c} I \\ X \mxr$$ with  $ |i(X+Y)-i  \left(\mxl{cc} Y & I \\ 0 & X \mxr \right) | \leq 1$. 
   Since $i(X+Y)=i(\Omega)= i(\Gamma)-1$, the following sequence of implications hold:
   \beqn
& &      \max\{i(X), i(Y)\}  \leq  i  \left(\mxl{cc} Y & I \\ 0 & X \mxr \right) \leq i(X)+i(Y)   \\    &\Rightarrow&  \max\{i(A), 2i(W)-1\}  \leq i  \left(\mxl{cc} Y & I \\ 0 & X \mxr \right) \leq i(A)+2i(W)-1 \\
    & \Rightarrow & \max\{i(A), 2i(W)-1\}-1  \leq i  (X+Y) \leq i(A)+2i(W) \\
    & \Rightarrow  & \max\{i(A), 2i(W)-1\}  \leq  i(\Gamma)\leq i(A)+2i(W)+1 \\
    & \Rightarrow & \max\{i(A), 2i(W)-1\} -1 \leq i(M) \leq i(A)+2i(W)+2.   
\eeqn

The expression for $M^D$ can be obtained via $\Gamma^D$, which in turn can be obtained via $\Omega^D= 
(\Gamma^2\Gamma^{-})^D + I - \Gamma\Gamma^{-} $. Since   $\Gamma^2\Gamma^{-} = X + Y $, we need to compute $(X+Y)^D$.

By \cite[Theorem~2.1]{Hartwig2001}, there exists an integer $k$
satisfying
$$\max\{i(X),i(Y)\}\leq k\leq i(X)+i(Y)$$
such that
\[
\begin{split}
(X+Y)^D
={}&
(I-YY^D)
\left(
I+\sum_{n=1}^{k-1}Y^n(X^D)^n
\right)X^D
\\
&+
Y^D
\left(
I+\sum_{n=1}^{k-1}(Y^D)^nX^n
\right)
(I-XX^D).
\end{split}
\]

Set
$$\ell=\max\{i(X),i(Y)\}-1.$$
We now show that the upper limit $k-1$ in the preceding expression
may be replaced by $\ell$.

Indeed, from the defining properties of the Drazin inverse, for every
integer $n\geq i(Y)$, we have
$$
Y^{n+1}Y^D=Y^n.
$$
Since $Y$ and $Y^D$ commute, it follows that
$$(I-YY^D)Y^n=Y^n-Y^{n+1}Y^D=0\qquad\text{for every }n\geq i(Y).$$
Similarly,
$$X^n(I-XX^D)=X^n-X^{n+1}X^D=0\qquad\text{for every }n\geq i(X).$$
Since $n\geq\ell+1$ implies $n\geq\max\{i(X),i(Y)\}$, we obtain
$$(I-YY^D)\sum_{n=\ell+1}^{k-1}Y^n(X^D)^n=0$$
and
$$Y^D\sum_{n=\ell+1}^{k-1}(Y^D)^nX^n(I-XX^D)=0.$$
Consequently,
\begin{equation}
\label{hartwig}
\begin{split}
(X+Y)^D
={}&
(I-YY^D)
\left(
I+\sum_{n=1}^{\ell}Y^n(X^D)^n
\right)X^D
\\
&+
Y^D
\left(
I+\sum_{n=1}^{\ell}(Y^D)^nX^n
\right)
(I-XX^D).
\end{split}
\end{equation}
Here and throughout the proof, a sum is understood to be zero whenever
its index set is empty.

Since $i(X)=i(A)$ and $i(Y)=2i(W)-1$, 
$$\ell=\max\{i(A),2i(W)-1\}-1.$$

Note that $\Omega = (X+Y)+E$ with $E=\mxl{cc} 0 & 0\\ 0 & I-WW^-\mxr$, and $(X+Y)E=E(X+Y)=0$, which implies $\Omega^D=(X+Y)^D + E$ and $\left(\Omega^D\right)^2 = \left( \left( X+Y\right)^D\right)^2 + E$. This will allow to obtain $\Gamma^D=(\Omega^D)^2 \Gamma = \left( (X+Y)^D\right)^2\Gamma$, since $E\Gamma=0$. 

Since $XY=0$ then clearly $X^DY=XY^D=X^D Y^D=0$ and $(I-XX^D)(I-YY^D)=I-XX^D-YY^D$. Squaring \eqref{hartwig}, we therefore obtain
\begin{eqnarray}
\label{squareDinv}
\left((X+Y)^D\right)^2
&=&
(I-YY^D)
\left(
I+\sum_{n=1}^{\ell}Y^n(X^D)^n
\right)
(X^D)^2
\nonumber\\
&&+
(Y^D)^2
\left(
I+\sum_{n=1}^{\ell}(Y^D)^nX^n
\right)
(I-XX^D)
-Y^DX^D.
\end{eqnarray}
 
Moreover,
\begin{eqnarray*}
I+\sum_{n=1}^{\ell}Y^n(X^D)^n
&=&
I+
\sum_{\substack{1\leq n\leq\ell\\ n\ {\rm odd}}}
Y^n(X^D)^n
+
\sum_{\substack{1\leq n\leq\ell\\ n\ {\rm even}}}
Y^n(X^D)^n
\\
&=&
I+
\sum_{\substack{1\leq n\leq\ell\\ n\ {\rm odd}}}
\bmx
0&0\\
W^{\frac{n+1}{2}}(A^D)^n&0
\emx
\\
&&+
\sum_{\substack{1\leq n\leq\ell\\ n\ {\rm even}}}
\bmx
W^{\frac{n}{2}}(A^D)^n&0\\
0&0
\emx
\\
&=&
\bmx
I+\alpha&0\\
\beta&I
\emx,
\end{eqnarray*}
where
\[
\alpha
=
\sum_{\substack{1\leq n\leq\ell\\ n\ {\rm even}}}
W^{\frac{n}{2}}(A^D)^n,
\qquad
\beta
=
\sum_{\substack{1\leq n\leq\ell\\ n\ {\rm odd}}}
W^{\frac{n+1}{2}}(A^D)^n.
\]

Likewise,
\begin{eqnarray*}
I+\sum_{n=1}^{\ell}(Y^D)^nX^n
&=&
I+
\sum_{\substack{1\leq n\leq\ell\\ n\ {\rm odd}}}
(Y^D)^nX^n
+
\sum_{\substack{1\leq n\leq\ell\\ n\ {\rm even}}}
(Y^D)^nX^n
\\
&=&
I+
\sum_{\substack{1\leq n\leq\ell\\ n\ {\rm odd}}}
\bmx
0&0\\
(W^D)^{\frac{n+1}{2}}A^n&0
\emx
\\
&&+
\sum_{\substack{1\leq n\leq\ell\\ n\ {\rm even}}}
\bmx
(W^D)^{\frac{n}{2}}A^n&0\\
0&0
\emx
\\
&=&
\bmx
I+\gamma&0\\
\delta&I
\emx,
\end{eqnarray*}
where
\[
\gamma
=
\sum_{\substack{1\leq n\leq\ell\\ n\ {\rm even}}}
(W^D)^{\frac{n}{2}}A^n,
\qquad
\delta
=
\sum_{\substack{1\leq n\leq\ell\\ n\ {\rm odd}}}
(W^D)^{\frac{n+1}{2}}A^n.
\]

Recall that $X^D=\bmx A^D & 0´\\ 0 & 0\emx$, $Y^D=\bmx 0 & W^DWW^-\\ WW^D & 0\emx$, $(Y^D)^2 = \bmx W^D & 0\\ 0 & WW^DW^-\emx$, $I-YY^D=\bmx I-WW^D & 0\\ 0 & I-WW^DWW^-\emx$, and also $(I-WW^DWW^-)W=(I-WW^D)W$ and $WW^DW^-W^D=W^D$.

The first summand of (\ref{squareDinv}) is then $\bmx (I-WW^D)(I+\alpha)(A^D)^2 & 0\\ (I-WW^D)\beta (A^D)^2 & 0\emx$, whereas the second summand equals $\bmx W^D(I+\gamma) (I-AA^D) & 0\\W^D\delta (I-AA^D) & WW^D W^-\emx$. The third summand  is simply $-Y^DX^D = \bmx 0 & 0\\-WW^D A^D & 0\emx$. 

We now proceed to compute $\Gamma^D = \left( (X+Y)^D\right)^2 \Gamma$, which leads to
$$\Gamma^D=
\bmx G_1 &
G_2\\
G_3 &
G_4\emx$$
where 
\begin{eqnarray*}
G_1 &=& (I-WW^D)(I+\alpha)A^D+W^D(I+\gamma)(I-AA^D)A \\
G_2 &=& (I-WW^D)(I+\alpha)(A^D)^2+W^D(I+\gamma)(I-AA^D)\\
G_3 &=& (I-WW^D)\beta A^D+W^D\delta (I-AA^D)A-WW^DAA^D +WW^D\\
G_4 &=& (I-WW^D)\beta (A^D)^2+W^D\delta (I-AA^D)-WW^DA^D,
\end{eqnarray*}
leading to 
\begin{equation*}
    M^D = \mxl{cc} A & I \\  C & 0 \mxr
    \mxl{cc} G_1 & G_2 \\  G_3 & G_4 \mxr^2
    \mxl{cc} I & 0 \\  0 & B \mxr . 
\end{equation*}

\een

\endproof

The following example highlights the subtlety of Theorem \ref{MainTh}(3b) concerning the bounds for $i(M)$. To this end, consider the matrices $A=\left[\begin{array}{rrr}
0 & 1 & 0 \\
0 & 0 & -1 \\
0 & 0 & 0
\end{array}\right]$, $B=\left[\begin{array}{rrr}
0 & 1 & 0 \\
0 & 0 & 0 \\
0 & 0 & 0
\end{array}\right]$, $C=\left[\begin{array}{rrr}
0 & 0 & 0 \\
0 & 0 & 1 \\
0 & 0 & 0
\end{array}\right]$, and therefore $M=\left[\begin{array}{rrr|rrr}
0 & 1 & 0 & 0 & 1 & 0 \\
0 & 0 & -1 & 0 & 0 & 0 \\
0 & 0 & 0 & 0 & 0 & 0 \\
\hline
 0 & 0 & 0 & 0 & 0 & 0 \\
0 & 0 & 1 & 0 & 0 & 0 \\
0 & 0 & 0 & 0 & 0 & 0
\end{array}\right]$.  For this example, $i(A)=3=2i(BC)-1$,  and $ i(M)=2$.

\begin{cor}
Let $M=\mxl{cc} A & B \\  C & 0 \mxr $, where $A$ is singular and $BC = 0$. Then
$$
   i(A)\le i(M) \le i(A) + 2.
$$ with $$M^D=\mxl{cc} A^D  & (A^D)^2B \\ C(A^D)^2 & C(A^D)^3B \mxr.$$
\end{cor}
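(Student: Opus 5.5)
The plan is to specialize the machinery of the proof of Theorem \ref{MainTh} to $W=BC=0$, rather than quote item 3 directly. Item 3 assumes $BC$ singular (true here) but also $i(M)>2$, and its bounds involve $2i(BC)-1$, which equals $-1$ when $W=0$. So I would rerun the argument with $W=0$ in place of an arbitrary $W$. Use the factorization \eqref{Dls}, $M=SR$, with $\Gamma=RS=\mxl{cc} A & I\\ 0 & 0\mxr$ since $W=0$. Lemma \ref{Cline} then gives $M^D=S(\Gamma^D)^2R$ and $|i(M)-i(\Gamma)|\le 1$.

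For the index, I would avoid the von Neumann detour: $\Gamma$ is block upper triangular with diagonal blocks $A$ and $0$. Its powers are $\Gamma^k=\mxl{cc} A^k & A^{k-1}\\ 0&0\mxr$, so
\[
\rank \Gamma^k=\rank\bmx A^k & A^{k-1}\emx=\rank A^{k-1}.
\]
Hence $i(\Gamma)=i(A)+1$, using that $A$ is singular so $i(A)\ge1$. The interval from Cline's lemma is then $i(A)\le i(M)\le i(A)+2$. Alternatively, the rank computation can be done directly on $M$. One has $M^{k}=\mxl{cc} A^k & A^{k-1}B\\ CA^{k-1} & CA^{k-2}B\mxr$ for $k\ge2$, because $BC=0$. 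The lower bound would follow from $\rank M^k\ge \rank A^k$ together with a comparison of kernels, but the Cline route is cleaner, and I will use it.

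For the Drazin inverse, compute $\Gamma^D$ via the standard triangular formula, or simply guess and verify:
\[
\Gamma^D=\mxl{cc} A^D & (A^D)^2\\ 0&0\mxr.
\]
The Drazin equations are direct to check. $\Gamma\Gamma^D=\mxl{cc}AA^D & A^D\\0&0\mxr=\Gamma^D\Gamma$, and $\Gamma^D\Gamma\Gamma^D=\Gamma^D$. For $k\ge i(A)+1$, $\Gamma^{k+1}\Gamma^D=\Gamma^k$ reduces to $A^{k+1}A^D=A^k$ and $A^{k+1}(A^D)^2=A^{k-1}$, which both hold. Then $(\Gamma^D)^2=\mxl{cc}(A^D)^2&(A^D)^3\\0&0\mxr$. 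Multiplying by $S$ on the left and $R$ on the right gives
\[
M^D=\mxl{cc}A(A^D)^2 & A(A^D)^3B\\ C(A^D)^2 & C(A^D)^3B\mxr=\mxl{cc}A^D&(A^D)^2B\\C(A^D)^2&C(A^D)^3B\mxr.
\]

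The only delicate point is the index computation for $\Gamma$, namely justifying that the rank of $\bmx A^k & A^{k-1}\emx$ equals $\rank A^{k-1}$ and that the stabilization index shifts by exactly one. This holds because the column space of $A^k$ lies inside that of $A^{k-1}$. The rest is routine.
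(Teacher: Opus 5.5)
Your proof is correct and takes a more direct route than the paper.

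The paper never computes $i(\Gamma)$ for $\Gamma=RS=\left[\begin{array}{cc} A & I\\ 0 & 0\end{array}\right]$. Instead it splits into cases on $i(M)$:
\begin{itemize}
\item it shows $M$ is singular by a rank argument;
\item it treats $i(M)=1$ by a kernel/range argument giving $\ker A^2=\ker A$, then checks the formula as a group inverse;
\item it excludes $i(M)=2$ via Theorem~\ref{MainTh}(2), whose criterion matrix becomes $-A$;
\item for $i(M)>2$ it quotes Theorem~\ref{MainTh}(3a) with $i(BC)=1$.
\end{itemize}

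Your observation that $\rank\Gamma^k=\rank A^{k-1}$ for $k\ge 1$ gives $i(\Gamma)=i(A)+1$ at once. A single application of Cline's lemma then delivers both the bounds and the formula, uniformly in $i(M)$. Singularity of $M$ falls out of $i(M)\ge i(A)\ge 1$. The argument does not even need $A$ singular, since $\rank\Gamma^0=2n\ne n=\rank\Gamma$.

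The paper's route has the merit of placing the corollary inside the general framework of Theorem~\ref{MainTh}. The uniformity of yours is more than cosmetic, however. The paper's exclusion of $i(M)=2$ rests on the ``only if'' half of Theorem~\ref{MainTh}(2). Its proof only shows that nonsingularity of the criterion matrix forces $i(\Gamma)=1$, and that half fails here. For instance, take $B=0$, $C=0$ and $A$ nilpotent of index $2$. Then $M=A\oplus 0$ has $i(M)=2$, although $-A$ is singular. The corollary's conclusion still holds in that case, and your proof covers it.

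One small inaccuracy in your motivation: $i(0)=1$, not $0$, because the minimal polynomial of a zero matrix is $\lambda$. So $2i(BC)-1=1$, and item 3(a) would give $\max\{i(A),1\}=i(A)$, which is exactly how the paper uses it. The real obstacle to quoting item 3 is its hypothesis $i(M)>2$. This does not affect your argument, which never uses that remark.
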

\proof 
We first show that $M$ cannot be nonsingular. Suppose, to the contrary, that $M$ is invertible. Then $C$ must have full row rank and $B$ must have full column rank, which contradicts $BC=0$. Therefore, $M$ is singular.

Suppose now that $i(M)=1$. We shall prove that, in this case, $i(A)\leq 1$.

Let $v\in\ker A^2$. Thus, $A^2v=0$. Set
$$u=M
\mxl{c}
v\\
0
\mxr=\mxl{c}
Av\\
Cv
\mxr.
$$
Using $A^2v=0$ and $BC=0$, we obtain
$$
M^2u=\mxl{cc}
A & B\\
C & 0
\mxr
\mxl{c}
A^2v+BCv\\
CAv
\mxr=\mxl{c}
0\\
0
\mxr.
$$
Hence, $u\in\ker M^2$. Since $i(M)=1$, the kernels of $M$ and $M^2$ coincide. Therefore, $u\in\ker M$. On the other hand, by its definition, $u\in R(M)$. Since $M$ is group invertible, $
\ker M\cap R(M) =\{0\}$. Consequently, $u=0$. In particular, $Av=0$. Thus, every vector in $\ker A^2$ belongs to $\ker A$, and hence $\ker A^2=\ker A$. It follows that $i(A)\leq1$. Since $A$ is singular, its Drazin index cannot be zero. Therefore, $i(A)=1$ and then the inequalities $i(A)\le i(M) \le i(A) + 2$ hold. Moreover, the matrix appearing in the explicit formula for $M^D$ in the statement, expressed in terms of $A^D$, satisfies the three defining equations of the group inverse and therefore coincides with $M^\#$.

We note that from the singularity of $A$ one cannot have $i(M)=2$, applying Theorem \ref{MainTh}(2).

For $i(M)>2$ the result follows applying Theorem \ref{MainTh}(3(a)) since $i(BC)=1$. \endproof

We  present several examples that show that the inequalities in the previous Corollary are indeed the best possible. All matrices in the following examples are over the field $\mathbb{Q}$ of rational numbers.

In the following example, $i(A)=2$ and $i(M)=3$, with $$A=\left[\begin{array}{rrr}
\frac{1}{2} & 0 & 0 \\
0 & 2 & -2 \\
\frac{1}{2} & 2 & -2
\end{array}\right], B=\left[\begin{array}{rrrrrr}
1 & -\frac{1}{2} & -2 & -\frac{1}{2} & -1 & \frac{1}{2} \\
0 & -1 & 1 & 0 & 0 & -1 \\
1 & -\frac{3}{2} & -1 & -\frac{1}{2} & -1 & -\frac{1}{2}
\end{array}\right], C=\left[\begin{array}{rrr}
1 & 0 & 0 \\
0 & 1 & 0 \\
0 & 0 & 1 \\
0 & 0 & 0 \\
1 & -1 & -\frac{3}{2} \\
0 & -1 & 1
\end{array}\right]$$ and $ M=\bmx A& B\\ C & 0\emx.$ Also, $BC=0$ and  $A^D=\left[\begin{array}{rrr}
2 & 0 & 0 \\
-8 & 0 & 0 \\
-6 & 0 & 0
\end{array}\right]$, which gives \\ $M^D = \left[\begin{array}{rrr|rrrrrr}
2 & 0 & 0 & 4 & -2 & -8 & -2 & -4 & 2 \\
-8 & 0 & 0 & -16 & 8 & 32 & 8 & 16 & -8 \\
-6 & 0 & 0 & -12 & 6 & 24 & 6 & 12 & -6 \\
\hline
 4 & 0 & 0 & 8 & -4 & -16 & -4 & -8 & 4 \\
-16 & 0 & 0 & -32 & 16 & 64 & 16 & 32 & -16 \\
-12 & 0 & 0 & -24 & 12 & 48 & 12 & 24 & -12 \\
0 & 0 & 0 & 0 & 0 & 0 & 0 & 0 & 0 \\
38 & 0 & 0 & 76 & -38 & -152 & -38 & -76 & 38 \\
4 & 0 & 0 & 8 & -4 & -16 & -4 & -8 & 4
\end{array}\right]$.

In the next example, $i(A)=i(M)=3$. We take $$A= \left[\begin{array}{rrr}
0 & 1 & 0 \\
0 & 0 & 1 \\
0 & 0 & 0
\end{array}\right],  B= \left[\begin{array}{rrrrr}
3 & 0 & 0 & 0 & 0 \\
1 & 0 & 0 & 0 & 0 \\
0 & 0 & 0 & 0 & 0
\end{array}\right], C=\left[\begin{array}{rrr}
0 & 0 & 0 \\
0 & 1 & 4 \\
0 & 0 & 0 \\
0 & 0 & 0 \\
0 & 0 & 0
\end{array}\right]$$

\iffalse
In the next example, $i(A)=i(M)=1$. We take $$
A= \left[\begin{array}{rrrr}
0 & 1 & 0 & -\frac{1}{2} \\
-1 & 2 & 0 & -\frac{1}{2} \\
-1 & 3 & 0 & -1 \\
1 & -1 & -2 & -1
\end{array}\right], B= \left[\begin{array}{rrrrrr}
0 & 2 & 1 & 1 & 0 & 0 \\
1 & 0 & 0 & 1 & 0 & 1 \\
1 & 2 & 1 & 2 & 0 & 1 \\
0 & -2 & -1 & -1 & 0 & 0
\end{array}\right], C=0.$$
\fi

Finally, we present an example in which $i(M)=i(A)+2$. We take 
$$A= \left[\begin{array}{rrr}
1 & -1 & 1 \\
1 & 2 & 0 \\
2 & 1 & 1
\end{array}\right], B= \left[\begin{array}{rrrr}
-1 & 0 & -1 & -\frac{1}{2} \\
0 & 0 & 0 & 0 \\
0 & 0 & 0 & 0
\end{array}\right], C= \left[\begin{array}{rrr}
1 & 0 & 0 \\
0 & 0 & 0 \\
0 & 0 & 0 \\
-2 & 0 & 0
\end{array}\right]$$ in which $i(A)=1$.

\begin{cor}
Let  $M=\mxl{cc} A & B \\  C & 0 \mxr $ with $i(M)>2$, $i(BC)=1$ and $A$ singular. 
\ben
\item If $ABC=0=BCA$, then $ i(A)\le i(M)\le i(A)+2  $. In particular, if $i(A)=1$ then  $i(M)=3$.%$1\le i(M)\le 3$.
\item If $ABC=0$, then $ i(A)-1\le i(M)\le i(A)+4  $. In particular, if $i(A)=1$ then $i(M)\le 5$.
\een

\end{cor}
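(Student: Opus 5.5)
The corollary should follow by specialising Theorem \ref{MainTh} to $i(BC)=1$. The hypotheses of that theorem hold: $A$ is singular by assumption, $BC$ is singular because $i(BC)=1$ (the index of a nonsingular matrix is $0$), and $M$ is singular because $i(M)>2$. So we may work in the regime $i(M)>2$ of items 3 and 4. With $i(BC)=1$ we have $2i(BC)-1=1$, and since $A$ is singular, $i(A)\ge 1$. Hence
$$\max\{i(A),\,2i(BC)-1\}=i(A).$$

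For item 1, assume $ABC=0=BCA$. Since $i(BC)=1$, the first alternative of Theorem \ref{MainTh}(3(a)) applies, giving
$$\max\{i(A),1\}\le i(M)\le \max\{i(A),1\}+2.$$
This is exactly $i(A)\le i(M)\le i(A)+2$. If moreover $i(A)=1$, the bounds read $1\le i(M)\le 3$. Combined with the standing assumption $i(M)>2$, this forces $i(M)=3$.

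For item 2, assume only $ABC=0$. Theorem \ref{MainTh}(4) gives
$$\max\{i(A),2i(BC)-1\}-1\le i(M)\le i(A)+2i(BC)+2.$$
Substituting $i(BC)=1$ turns the lower bound into $i(A)-1$ and the upper bound into $i(A)+4$. Taking $i(A)=1$ then yields $i(M)\le 5$.

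No step here is a real obstacle; the argument is a direct substitution. The only care needed is to check that every hypothesis of Theorem \ref{MainTh} is met, in particular the singularity of $M$, $A$ and $BC$, and that $i(M)>2$ places us in the branch covered by items 3 and 4.
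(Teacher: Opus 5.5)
Your proposal is correct and matches the paper's route. The paper gives no separate proof; as in the preceding corollary for $BC=0$, the result follows by checking that $M$, $A$ and $BC$ are singular and substituting $i(BC)=1$ (so $2i(BC)-1=1\le i(A)$) into Theorem~\ref{MainTh}(3(a)) and (4), with $i(M)>2$ forcing $i(M)=3$ when $i(A)=1$.
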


\begin{cor}
Given  $M=\mxl{cc} 0 & B \\  C & 0 \mxr $ with $BC$ singular, then
$$
  2i(BC)-1\le i(M) \le 2i(BC)+1.
$$ with $$M^D=\mxl{cc} 0  & B(CB)^D \\ (CB)^D C & 0 \mxr.$$
\end{cor}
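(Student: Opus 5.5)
The plan is to reduce to Theorem \ref{MainTh}(3) with $A=0$, and to handle separately the small-index cases that this theorem does not cover. Set $W=BC$. With $A=0$ we have trivially $ABC=0=BCA$. Moreover $A$ is singular with $A^D=0$ and $i(A)=1$, since the zero matrix satisfies $\ker 0=\ker 0^2$. So whenever $i(M)>2$, Theorem \ref{MainTh}(3a) applies with $i(A)=1$. Since $i(BC)\ge 1$, we have $\max\{1,2i(BC)-1\}=2i(BC)-1$, and the bounds read $2i(BC)-1\le i(M)\le 2i(BC)+1$. The Drazin formula of Theorem \ref{MainTh}(3) with $A^D=0$ collapses to $\mxl{cc} 0 & B(CB)^D \\ (CB)^DC & 0\mxr$, as claimed.

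Next I treat the case $i(M)\le 2$, which Theorem \ref{MainTh}(3) does not cover. Here the upper bound is immediate, because $i(M)\le 2<2i(BC)+1$. For the lower bound I will use that $M^2=\mxl{cc} BC & 0\\ 0 & CB\mxr$. The index of a block diagonal matrix is the maximum of the indices of its blocks, so $i(M^2)=\max\{i(BC),i(CB)\}\ge i(BC)$. On the other hand, $i(M^2)=\lceil i(M)/2\rceil$, which follows from the rank characterization of the index. Hence $i(M)\ge 2i(M^2)-1\ge 2i(BC)-1$. This argument is in fact valid for every $i(M)$, so the lower bound does not depend on the theorem at all. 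I also need $M$ to be singular, so that $i(M)\ge 1$: since $M^2$ has the singular block $BC$, $M$ is singular.

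For the Drazin inverse when $i(M)\le 2$, I will verify directly that $Z=\mxl{cc} 0 & B(CB)^D \\ (CB)^DC & 0\mxr$ satisfies the three defining equations. By Lemma \ref{Cline}, $(BC)^D=B((CB)^D)^2C$, and hence $B(CB)^DC=BC(BC)^D$. From this, $MZ=\mxl{cc} BC(BC)^D & 0\\ 0& CB(CB)^D\mxr=ZM$. The equation $ZMZ=Z$ then follows from $(CB)^DCB(CB)^D=(CB)^D$. Finally, $M^{k+1}Z=M^k$ for $k\ge i(M)$ follows by writing even and odd powers of $M$ blockwise: $M^{2j}=\mathrm{diag}((BC)^j,(CB)^j)$ and $M^{2j+1}=\mxl{cc}0&(BC)^jB\\ (CB)^jC&0\mxr$. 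One then uses $(CB)^{j}CB(CB)^D=(CB)^j$ for $j$ large. By uniqueness of the Drazin inverse, this direct verification covers all cases, so it can replace the appeal to the theorem for the formula.

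The only delicate point is the identity $i(M^2)=\lceil i(M)/2\rceil$. I expect to prove it via the kernel chain: $\ker M^{2j}$ stabilizes exactly when $2j\ge i(M)$. If that proves awkward, the fallback is the route above, which needs the lower bound only when $i(M)\le 2$. In that range it reduces to showing $i(BC)=1$. If $i(M)\le 2$, then $M^2$ is group invertible, so its diagonal block $BC$ is group invertible as well.
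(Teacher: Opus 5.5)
Your argument is correct. For $i(M)>2$ it coincides with the paper's: Theorem~\ref{MainTh}(3a) with $A=0$, $i(A)=1$ and $A^D=0$. The difference lies in the cases $i(M)\le 2$ and in how the formula for $M^D$ is justified.

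The paper handles these cases as follows:
\begin{itemize}
\item it notes that $i(M)=1$ forces $i(BC)=1$;
\item it rules out $i(M)=2$ by Theorem~\ref{MainTh}(2), which for $A=0$ would require $BC$ to be nonsingular;
\item it never checks the formula for $M^D$ when $i(M)\le 2$.
\end{itemize}

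Your route is more robust on each of these points. The lower bound via $M^2=\diag(BC,CB)$ and $i(M^2)=\lceil i(M)/2\rceil$ is elementary and holds for every value of $i(M)$. Your direct verification of the Drazin equations gives the formula in all cases. For the third equation it suffices to have $M^{k+1}Z=M^k$ for some $k$, by uniqueness of the Drazin inverse; your block formulas give this as soon as $j-1\ge i(CB)$.

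In fact, the step you bypass is not sound. The proof of Theorem~\ref{MainTh}(2) only establishes the ``if'' direction: nonsingularity of $BC-(I-BC(BC)^-)A$ gives $i(\Gamma)=1$, hence $i(M)\le 2$. The converse fails. For instance, $B=[1]$ and $C=[0]$ give $M=\left[\begin{smallmatrix}0&1\\0&0\end{smallmatrix}\right]$, with $BC$ singular and $i(M)=2$. The corollary still holds in this case, exactly by your fallback observation: $i(M)\le 2$ makes $M^2$, and hence $BC$, group invertible, so $i(BC)=1$.

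If you wish, you can drop the appeal to the theorem altogether. Put $k=i(BC)\ge 1$. Then $\rank\big((BC)^kB\big)=\rank (BC)^{k+1}$, since the column space of $(BC)^kB$ lies in $R((BC)^k)$. Also $\rank\big((CB)^kC\big)=\rank (CB)^{k+1}$, because $C(BC)^k=(CB)^{k+1}C(BC)^D$. Hence $\rank M^{2k+1}=\rank M^{2k+2}$, which gives $i(M)\le 2i(BC)+1$.
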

 \proof We note that from the singularity of $BC$ one cannot have $i(M)=2$, applying Theorem \ref{MainTh}(2). Also,  $i(M)=1$ implies $i(BC)=1$ and the inequalities hold. We are left with the case $i(M)>2$.  The result follows applying Theorem \ref{MainTh}(3a). \endproof

We now consider the specific cases that we avoided in the previous result, namely $A$ being invertible and $BC$ being invertible. We note that in the case $A$ is nonsingular, then   $ABC=0$ is equivalent to $BC=0$.

\begin{thm}\label{th3-2}
Let $M=\mxl{cc} A & B \\  C & 0 \mxr $  where $A\in \F^{n\xx n}, B\in \F^{n\xx m}, C\in \F^{m\xx n}$, and suppose  further that $BC$ is nonsingular. Then $M$ is invertible if and only if  $n=m$,  $B$ and $C$ are invertible, and $i(M)=1$ otherwise. Furthermore, $$M^\# = \bmx 0 & (BC)\io B \\ C(BC)\io  & -C(BC)\io A (BC)\io B\emx.$$

\end{thm}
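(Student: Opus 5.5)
The plan is to first settle the dimension and invertibility claims with a rank count. Then I will check directly that the proposed matrix satisfies the three defining equations of the group inverse. Throughout, write $P=(BC)\io\in\F^{n\xx n}$.

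\textbf{Step 1 (ranks and invertibility).} Since $BC$ is an invertible $n\xx n$ matrix, we have $n=\rank(BC)\le\min\{\rank B,\rank C\}\le \min\{n,m\}$. Hence $n\le m$, $B$ has full row rank $n$, and $C$ has full column rank $n$.

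If $n=m$, then $B$ and $C$ are square of full rank, hence invertible. Expanding $M$ by the Laplace rule along the last $m$ columns gives $\det M=\pm\det B\det C\ne 0$, so $M$ is invertible.

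If $n<m$, then $\rank M\le \rank\bmx A & B\emx+\rank\bmx C & 0\emx\le n+n<n+m$, so $M$ is singular. Conversely, if $M$ is invertible, this bound forces $n=m$, and then $B$ and $C$ are invertible as shown above. This proves the stated equivalence.

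\textbf{Step 2 (the candidate is $M^\#$).} Let $X=\bmx 0 & PB\\ CP & -CPAPB\emx$. I will multiply out the products using only $BCP=I$ and $PBC=I$.
\begin{itemize}
\item For $MX$: the $(1,1)$ block is $BCP=I$, the $(1,2)$ block is $APB-BCPAPB=0$, the $(2,1)$ block is $0$, and the $(2,2)$ block is $CPB$.
\item For $XM$: the $(1,1)$ block is $PBC=I$, the $(1,2)$ block is $0$, the $(2,1)$ block is $CPA-CPAPBC=0$, and the $(2,2)$ block is $CPB$.
\end{itemize}
Hence $MX=XM=\bmx I & 0\\ 0 & CPB\emx$, where $CPB$ is idempotent because $(CPB)^2=CP(BCP)B=CPB$.

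Next, $MXM=\bmx I&0\\0&CPB\emx M=\bmx A & B\\ CPBC & 0\emx=M$, using $CPBC=C$. Also, $XMX=X\bmx I&0\\0&CPB\emx$. Its blocks are $PBCPB=PB$ and $-CPAPBCPB=-CPAPB$, so $XMX=X$.

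Therefore $X$ satisfies $MXM=M$, $XMX=X$ and $MX=XM$. This shows $i(M)\le 1$ and $M^\#=X$. Combined with singularity when $n<m$, we get $i(M)=1$ in that case. When $n=m$, the same $X$ is $M^{-1}$, since then $CPB=C(BC)\io B=I$.

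\textbf{Main obstacle.} There is no deep step here. The only care needed is in Step 1: justifying that full row rank of $B$ and full column rank of $C$ follow from the invertibility of $BC$, and getting the rank bound for $M$ right. In Step 2, the key observation is that every block simplifies through $BCP=I$ and $PBC=I$; the right-hand identity $CPB=I$ holds only when $n=m$.
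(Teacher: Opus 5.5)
Your proof is correct, and Step 1 is essentially the paper's rank argument: the paper bounds $\rank M\le\rank S\le 2n$ for $S=\bmx A & I\\ C & 0\emx$, while you bound $\rank M\le \rank\bmx A & B\emx+\rank\bmx C & 0\emx$. Step 2 takes a different route. The paper factors $M=SR$ with $R=\bmx I & 0\\ 0 & B\emx$ and observes that $RS=\bmx A & I\\ BC & 0\emx$ is invertible. It then gets $i(M)\le 1$ from Cline's lemma ($|i(SR)-i(RS)|\le 1$) and \emph{derives} the formula as $M^\#=S\left((RS)\io\right)^2R$, using $(RS)\io=\bmx 0 & (BC)\io\\ I & -A(BC)\io\emx$. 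You instead take the candidate from the statement and check the three defining equations directly, using only $BC(BC)\io=(BC)\io BC=I$. The paper's route shows where the formula comes from, and it is the same $SR$/Cline mechanism used throughout Section 4, for example in the case $A$ nonsingular, $BC=0$. Your route is elementary and self-contained. It also gives the extra fact $MM^\#=M^\#M=\bmx I & 0\\ 0 & C(BC)\io B\emx$. Since $C(BC)\io B$ is an idempotent of rank $n$, this alone shows that $M$ is invertible exactly when $C(BC)\io B=I_m$, that is, when $n=m$. So the dimension part could even be read off Step 2 without a separate rank count.
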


\proof %The first part of the result is trivial.

 Factoring   $M=\mxl{ll} A & B \\   C & 0 \mxr $ as $ \mxl{ll} A & I \\   C & 0 \mxr
    \mxl{ll} I & 0 \\   0 & B \mxr = SR,$ and since $i(RS)=0$ then either $M$ is nonsingular or $i(M)=1$. For the former, as $BC$ is nonsingular then $n\le m$. Trivially, if $m=n$ and $B,C$ are nonsingular, then $M$ is invertible. Conversely, if $\rank(M)=n+m$ then $n+m\le \rank(S)\le n+n$, which implies that $m\le n$. This means $m=n$ and $B,C$ are square matrices, and therefore $S$ and $R$ are invertible block matrices, leading to the invertibility of $B$ and $C$.

    For the expression of $M^\#$, we apply the formula $M^\# = S\left( (RS)\io\right)^2 R$ and the fact that $\bmx A & I\\ BC & 0\emx\io = \bmx 0 & (BC)\io \\ I & -A(BC)\io\emx$. \endproof 
    
 \begin{thm}
Let $M=\mxl{cc} A & B \\  C & 0 \mxr $  where $A$ and $BC$ are square matrices over a field. Suppose further that $A$ is nonsingular and $BC=0$. Then $i(M)=1$ if and only if $$A(I-C^+C)+(I-ZZ^-)(I-BB^+)(I+AC^+C-C^+C) $$  is invertible, where $Z=(I-BB^+)A(I-C^+ C)$, and for one choice, and hence all choices, of $B^+$, $C^+$ and $Z^-$. Otherwise, $i(M)=2$.

Moreover,
$$M^D = \bmx A\io & A^{-2}B\\
CA^{-2} & CA^{-3} B\emx.$$
 
\end{thm}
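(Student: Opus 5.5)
The plan is to reuse the factorization $M=SR$ from \eqref{Dls}, with $S=\mxl{cc} A & I \\ C & 0 \mxr$ and $R=\mxl{cc} I & 0 \\ 0 & B \mxr$, and to work with $\Gamma=RS=\mxl{cc} A & I \\ BC & 0 \mxr$. Since $BC=0$, we have $\Gamma=\mxl{cc} A & I \\ 0 & 0 \mxr$, and $\Gamma$ is group invertible because $A$ is nonsingular. Indeed, writing $\Gamma=\mxl{c} I \\ 0 \mxr \mxl{cc} A & I \mxr$ and reversing the product gives the invertible matrix $A$. Hence $i(\Gamma)\le 1$, and by Lemma \ref{Cline} we get $i(M)\le 2$.

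The next step is to rule out $i(M)=0$, exactly as in the first corollary after Theorem \ref{MainTh}. If $M$ were invertible, $C$ would have full row rank and $B$ full column rank, which is incompatible with $BC=0$ unless the zero block is empty; that degenerate case I would exclude tacitly. So $i(M)\in\{1,2\}$, and $i(M)=2$ holds exactly when $i(M)\ne 1$.

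The group-invertibility criterion should then follow from Theorem \ref{MainTh}(1), that is, from \cite[Theorem 2.1]{Group220}. That criterion did not rely on the singularity hypotheses on $A$ and $BC$. With $BC=0$, the expression $A(I-C^-C)-BC+(I-ZZ^-)(I-BB^-)(I+AC^-C-C^-C)$ reduces to the one in the statement. The choice of reflexive inverses $B^+,C^+$ is harmless, since reflexive inverses are in particular von Neumann inverses, and the ``one and hence all'' clause is inherited. The main obstacle is justifying that this cited result applies without the singularity assumptions. If needed, I would instead re-derive it from the rank characterization $\rank M=\rank M^2$, using the Schur-type Lemma \ref{schurcomp}.

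For the Drazin inverse I use $M^D=S((\Gamma^D)^2)R$, and here $\Gamma^D=\Gamma^\#$. I would verify directly that
\[
\Gamma^\#=\mxl{cc} A\io & A^{-2} \\ 0 & 0 \mxr
\]
by checking $\Gamma X=X\Gamma=\mxl{cc} I & A\io \\ 0 & 0 \mxr$ together with $X\Gamma X=X$ and $\Gamma X\Gamma=\Gamma$. Then $(\Gamma^\#)^2=\mxl{cc} A^{-2} & A^{-3} \\ 0 & 0 \mxr$. Multiplying on the left by $S$ and on the right by $R$ yields
\[
\mxl{cc} A\io & A^{-2}B \\ CA^{-2} & CA^{-3}B \mxr,
\]
which is the stated formula.
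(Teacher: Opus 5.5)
Your proposal is correct and follows essentially the same route as the paper: $M=SR$ with $RS=\Gamma$ group invertible gives $i(M)\le 2$, the singularity of $M$ leaves $i(M)\in\{1,2\}$, the case $i(M)=1$ is settled by citing \cite[Theorem 2.1]{Group220}, and $M^D=S\left((RS)^\#\right)^2R$ yields the stated formula. You also spell out two details the paper leaves implicit: why $M$ cannot be invertible when $BC=0$, and why using reflexive inverses $B^+,C^+$ in place of arbitrary von Neumann inverses preserves the ``one and hence all'' clause.
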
   
\proof Consider the factorization   $M=\mxl{cc} A & B \\   C & 0 \mxr = \mxl{cc} A & I \\   C & 0 \mxr
    \mxl{cc} I & 0 \\   0 & B \mxr = SR$. Since $i(RS)=1$ with $(RS)^\#=\bmx A\io & A^{-2}\\ 0 & 0\emx$, then, and since $M$ cannot be invertible, either $i(M)=1$ or $i(M)=2$. For the former, we refer to \cite[Theorem 2.1]{Group220}.
    
The expression for $M^D$ follows from $(SR)^D = S\left((RS)^\#\right)^2R.$ \endproof

}

 \begin{thm}
 Let $M=\mxl{cc} A & I\\  C & 0 \mxr $  where $A$ and $C$ are square matrices over a field. The following hold:
 \ben
 \item $i(M)=0$ if and only if $i(C)=0$.
 \item If  $C$ is singular then $i(M)=1$ if and only if $i(C-A(I-C^-C))=0$ for one and hence all choices of $C^-\in C\{1\}$.
 \een
 
% Otherwise,  
For $i(M)\ge 2$:
\ben
 \item[3.] If   $AC=CA=0$ then  
 \ben 
 \item  if $i(A)\ne 2i(C)-1$ or $i(A)=1$ or $i(C)=1$ then $$i(M)=\max\{i(A)+1, 2i(C)\},$$
 \item  else %If $i(A)>1, i(C)>1$ and $i(A)=2i(C)-1$ then 
 $$i(A)\le i(M)\le i(A)+1.$$
 \een
 
 \item[4.] If $AC=0$ then $$\max\{i(A),2i(C)-1\}\le i(M)\le i(A)+2i(C)+1.$$
 \een
  \end{thm}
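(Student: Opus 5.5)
The key observation is that with $B=I$ we have $BC=C$, and the matrix $M$ coincides with the auxiliary matrix $\Gamma=\mxl{cc} A & I \\ W & 0 \mxr$, $W=C$, from the proof of Theorem \ref{MainTh}. We therefore never pass through the factorization $M=SR$ and Cline's lemma at the outermost level. This removes the $\pm1$ slack and turns several inequalities of Theorem \ref{MainTh} into equalities. Throughout, I reuse the decompositions from that proof:
\[
\Omega=M^2M^-+I-MM^-=(X+Y)+E,
\]
with $X=\mxl{cc} A & 0\\ 0 & 0\mxr$, $Y=\mxl{cc} 0 & CC^-\\ C & 0\mxr$ and $E=\mxl{cc} 0&0\\0&I-CC^-\mxr$ idempotent, $(X+Y)E=E(X+Y)=0$.

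For item 1, I write $M=TP$ with $T=\mxl{cc} I & A\\ 0 & C\mxr$ and $P$ the block swap. Then $M$ is invertible if and only if $T$ is, that is, if and only if $C$ is invertible. For item 2, when $C$ is singular, $M$ is singular. I would invoke the group-invertibility criterion \cite[Theorem 2.1]{Group220}, which is the criterion in Theorem \ref{MainTh}(1), specialised to $B=I$. Choosing $B^-=I$ gives $I-BB^-=0$, so $Z=0$ and the term containing $(I-ZZ^-)$ vanishes. The criterion then reduces to nonsingularity of $A(I-C^-C)-C$, equivalently $i(C-A(I-C^-C))=0$. I must check that the cited result does not require $A$ singular; its proof is about group invertibility of a general anti-triangular block matrix, so it should not.

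For items 3 and 4 we have $i(M)\ge 2$, so $M$ is singular, and by Theorem \ref{DrazinInv}, $i(M)=i(\Omega)+1$. Since $X+Y$ is singular (its second block row is $[C\ \ 0]$ with $C$ singular) and $i(E)=1$, Lemma \ref{orthsum}(1) gives $i(\Omega)=i(X+Y)$. Hence
\[
i(M)=i(X+Y)+1 .
\]
Lemma \ref{Y_from_W} gives $i(Y)=2i(C)-1$. Also $i(X)=i(A)$ if $A$ is singular, and $i(X)=1=i(A)+1$ if $A$ is nonsingular.

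In item 3, the hypothesis $AC=CA=0$ is exactly $XY=YX=0$. In case (a), Lemma \ref{orthsum}(1) gives $i(X+Y)=\max\{i(X),2i(C)-1\}$, so $i(M)=\max\{i(X)+1,2i(C)\}$. For $A$ singular this is the claimed $\max\{i(A)+1,2i(C)\}$. For $A$ nonsingular, $i(X)=1$ and $2i(C)\ge 2$, so both $\max\{2,2i(C)\}$ and the claimed $\max\{1,2i(C)\}$ equal $2i(C)$. In case (b), Lemma \ref{orthsum}(2) gives $i(A)-1\le i(X+Y)\le i(A)$, hence $i(A)\le i(M)\le i(A)+1$. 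Here $i(A)>1$, so $A$ is singular and $i(X)=i(A)$.

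In item 4, $AC=0$ gives $XY=0$. I write $X+Y=\bmx Y & I\emx\mxl{c} I\\ X\mxr$ and apply Lemma \ref{Cline}, which gives $|i(X+Y)-i(T')|\le 1$ for the block triangular matrix $T'=\mxl{cc} Y & I\\ 0 & X\mxr$. The Hartwig--Shoaf/Meyer--Rose bounds give
\[
\max\{i(X),i(Y)\}\le i(T')\le i(X)+i(Y).
\]
Therefore $\max\{i(A),2i(C)-1\}-1\le i(X+Y)\le i(A)+2i(C)$ when $A$ is singular. Adding one gives $\max\{i(A),2i(C)-1\}\le i(M)\le i(A)+2i(C)+1$. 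If $A$ is nonsingular, $i(X)=1$ and $i(C)\ge1$, so the lower bound becomes $\max\{1,2i(C)-1\}=2i(C)-1$, which is the claimed value. The upper bound becomes $2i(C)+1$, which matches the claimed $i(A)+2i(C)+1$ with $i(A)=0$.

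The main obstacle I expect is bookkeeping rather than new ideas. The points to handle are the possibly nonsingular $A$ (where $i(X)\ne i(A)$), the singularity of $X+Y$ needed to apply Lemma \ref{orthsum}, and the verification that the cited group-invertibility criterion applies with $B=I$ without a singularity assumption on $A$.
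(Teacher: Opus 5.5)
Your proposal is correct and follows the paper's proof essentially step for step. For item 2 you specialize the group-invertibility criterion of \cite{Group220} to $B=I$; the paper cites that reference's Corollary 2.2, which is exactly this specialization. For items 3--4 you identify $M$ with $\Gamma$ (taking $W=C$), so that $i(M)=i(\Omega)+1$ and the orthogonal-sum and Cline/Hartwig--Shoaf--Meyer--Rose arguments carry over without the outer $\pm1$.

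Two small remarks. First, the paper shortcuts your nonsingular-$A$ bookkeeping: $AC=0$ with $A$ nonsingular forces $C=0$ and hence $i(M)=1$, so that case never arises. Second, when you apply Lemma \ref{orthsum} to the pair $(X+Y,\,E)$, the singularity the lemma requires is that of $\Omega=(X+Y)+E$, not of $X+Y$. You already have this, since $i(\Omega)=i(M)-1\ge 1$.
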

  \proof (1) is trivial and (2) follows from \cite[Corollary 2.2]{Group220}.
  
  For (3) and (4),  we remark that   $A$ is necessarily singular otherwise $C=0$ and hence $i(M)=1$, with $M^\# = \bmx A\io & A^{-2}\\ 0 & 0\emx$. We will use an analogous reasoning we took in the proof of Theorem \ref{MainTh}, by taking $W=C$ in $\Gamma$.  
   As such, there exists $M^-\in M\{1\}$ such that $MM^- = \bmx I & 0\\ 0 & CC^-\emx$, which leads to $\Omega =  M^2M^- +I-MM^-=\bmx A & CC^-\\ C & I-CC^-\emx = \bmx A & CC^-\\ C & 0\emx + \bmx 0 & 0\\ 0 & I-CC^-\emx$. This is an orthogonal sum  and hence, since $\Omega$ is singular as $i(M)\ge 2$, we obtain $i(\Omega) = i\left(\bmx A & CC^-\\ C & 0\emx\right)$. As in the proof of Theorem \ref{MainTh}, if $AC=0=CA$ we have,  and by considering the sum $\bmx A & CC^-\\ C & 0\emx = \bmx A & 0\\ 0 & 0\emx + \bmx 0& CC^-\\ C & 0\emx$ and its index related to the indices of the summands as in Lemma \ref{orthsum} and in Lemma \ref{Y_from_W},
    
   \ben
   \item[(a)] if $i(A)\ne 2i(C)-1$ or $i(A)=1$ or $i(C)=1$ then   $i(\Omega)=\max \{i(A),2i(C)-1\}$, and since $i(M)=i(\Omega)+1$, the result follows;
   \item[(b)] if $i(A)>1, i(C)>1, i(A)=2i(C)-1$ then $i(A)-1\le i(\Omega) \le i(A)$, from which $i(A)\le i(M)\le i(A)+1$.
   \een

  If $AC=0$ we repeat the steps of the proof of Theorem \ref{MainTh} in order to obtain $$\max\{i(A),2i(C)-1\}-1\le i(\Omega) \le i(A)+2i(C).$$ Since $i(M)=i(\Omega)+1$, the result follows. \endproof

\section{Applications to digraph matrices}

The intersection of generalized inverses and graph theory has garnered significant attention in academic literature due to the broad applicability of these subjects across diverse scientific domains. Key matrix representations, including the incidence matrix, adjacency matrix, and Laplacian matrix, are fundamental to the analysis of network flow, electrical networks, the definition of novel graph-theoretic distances, and the study of Markov processes. For a short introduction to this symbiosis, the reader is referred to \cite{Kelathaya}.

 Given a (weighted) digraph $D(A)= (V , E)$ with vertex set $V = \{1,\dots , n\}$ and  arc set $E\subseteq V\times V$, we construct the adjacency matrix $A$ by setting $a_{ij} = 1$ if and only if $ e = (i, j) \in E$. If we are in the presence of a weighted digraph, then there is a weight $w_{ij}\ne 0$ related to each arc that connects the vertex $v_i$ to the vertex $v_j$ , and in this case we consider the matrix $A=[w_{ij}]$. Note that if $A_1$ and $A_2$ are (weighted) adjacency matrices of the same graph then $A_1 = PA_2P\io$, for some permutation matrix $P$. The index of a matrix is invariant to matrix similarity, and if $A_1 = UAU\io$ then $A_1^D = UA^D U\io$. So, the considered order of the vertices   is irrelevant when addressing the index of these matrices.

 For example, any weighted bipartite digraph is fully characterized, up to permutation similarity, by an adjacency matrix of the form $\bmx 0 & B\\ C & 0\emx$,  where the zero blocks are square, called bipartite matrices. The group and Drazin inverses of these matrices were studied in  \cite{CatralDrazin, CatralGroup, CatralLAA}. We now apply Theorem \ref{MainTh}(1)  and Theorem \ref{th3-2} with $A=0$.
 \begin{thm} Given a bipartite matrix $ M=\bmx 0 & B\\ C & 0\emx$, then
 \ben
 \item if $BC$ is singular, then $ 2i(BC)-1\le i(M)\le 2i(BC)+1$ and  $$     M^D = 
    \mxl{cc}
    0 & B (CB)^D \\
    (CB)^D C & 0
    \mxr =  \mxl{cc}
    0 &   (B C)^D B \\
    C(BC)^D & 0
    \mxr.
$$
 \item if $BC$ is nonsingular, then $M$ is invertible if and only if $B$ and $C$ are invertible, and $M$ is group invertible otherwise. Moreover, 
 $$M^\# = \bmx 0 & (BC)\io B \\ C(BC)\io  & 0\emx.$$
 \een
 
 \end{thm}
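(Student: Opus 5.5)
Both items follow by specializing earlier results to $A=0$; only the equality of the two expressions for $M^D$ in item 1 and the index bound in the small-index cases need separate attention.

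\emph{Item 1.} Assume $BC$ is singular. Then $M$ is singular, since $\rank M=\rank B+\rank C$ and $\rank(BC)<n$ prevents $M$ from being invertible. I will invoke the preceding corollary for $M=\bmx 0 & B\\ C & 0\emx$ with $BC$ singular, which gives
\[
2i(BC)-1\le i(M)\le 2i(BC)+1, \qquad M^D=\bmx 0 & B(CB)^D\\ (CB)^DC & 0\emx .
\]
That corollary rests on Theorem \ref{MainTh}(3a) with $A=0$. The hypotheses $ABC=BCA=0$ are automatic, and $i(A)=1$ there because the zero matrix has index one, so case (a) applies. The cases $i(M)\le 2$ are settled separately. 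First, $i(M)=2$ is excluded by Theorem \ref{MainTh}(2): with $A=0$ the test matrix is $BC$, which is singular. Second, if $i(M)=1$, then Theorem \ref{DrazinInv} applied to $\Gamma$, together with $i(\Omega)=2i(BC)-1$, forces $i(BC)=1$. Hence the bounds hold in every case.

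For the formula, the entries of Theorem \ref{MainTh}(3) with $A^D=0$ reduce to $B(CB)^D$ and $(CB)^DC$, and the $(2,2)$ entry $C(A^D)^3B$ vanishes. For the second form of $M^D$, I will use Cline's formula (Lemma \ref{Cline}), $(BC)^D=B((CB)^D)^2C$. This gives
\[
(BC)^DB=B\bigl((CB)^D\bigr)^2CB=B(CB)^D,
\]
since $(CB)^D$ commutes with $CB$ and $((CB)^D)^2CB=(CB)^D$. Symmetrically, $C(BC)^D=CB((CB)^D)^2C=(CB)^DC$. This is the same computation as at the end of the proof of Theorem \ref{MainTh}(3).

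\emph{Item 2.} Assume $BC$ is nonsingular. I will apply Theorem \ref{th3-2} with $A=0$. It gives that $M$ is invertible if and only if $n=m$ and $B,C$ are invertible, and that $i(M)=1$ otherwise. The hypothesis $n=m$ holds automatically for a bipartite matrix whose invertible blocks are square, so it can be dropped from the statement. In the group inverse formula of Theorem \ref{th3-2}, the $(2,2)$ entry $-C(BC)^{-1}A(BC)^{-1}B$ vanishes when $A=0$, which yields the stated $M^\#$.

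The main obstacle is the small-index cases of item 1, where Theorem \ref{MainTh}(3) is stated only for $i(M)>2$. These are exactly the cases $i(M)\le 2$ handled separately above, and that argument is what makes the bounds hold in full generality.
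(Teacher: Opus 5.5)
Your overall route is the paper's: you specialize the preceding corollary (that is, Theorem \ref{MainTh}(3a) with $A=0$ and $i(0)=1$) and Theorem \ref{th3-2} to $A=0$. The Cline computation identifying the two forms of $M^D$, and all of item 2, are fine. But the step you single out as the crux is wrong: $i(M)=2$ is \emph{not} excluded when $BC$ is singular.

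For $n=m=1$, $B=[1]$, $C=[0]$ you get $M=\bmx 0&1\\0&0\emx$ with $i(M)=2$ and $BC=0$. A non-nilpotent instance is $B=I_2$, $C=\diag(1,0)$, where $\rank M=3$ and $\rank M^2=\rank M^3=2$. Theorem \ref{MainTh}(2) is only established, and only true, in the ``if'' direction. Nonsingularity of $BC-(I-BC(BC)^-)A$ makes $\Omega$ nonsingular, so $i(\Gamma)=1$ and hence $i(M)\le 2$. The converse is what you use, and it fails: in the first example $i(\Gamma)=2=i(M)$ although the test matrix $BC$ is singular. The paper's own corollary makes the same slip. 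So, as written, your argument does not cover the case $i(M)=2$, even though the bound happens to hold there.

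The repair is already in your $i(M)=1$ argument, and it makes the case split unnecessary. With $A=0$ and $W=BC$ singular, $\Gamma=\bmx 0&I\\W&0\emx$ satisfies $\rank\Gamma^k=\rank W^{\lfloor k/2\rfloor}+\rank W^{\lceil k/2\rceil}$, so $i(\Gamma)=2i(W)$. Equivalently, $\Omega=Y+E$ is an orthogonal sum with $i(E)=1$ and $i(Y)=2i(W)-1$, so $i(\Omega)=2i(W)-1$ and Theorem \ref{DrazinInv} gives the same value. Lemma \ref{Cline} then yields $|i(M)-2i(BC)|\le 1$ whatever $i(M)$ is, which is exactly the bound in item 1.

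Likewise, the formula for $M^D$ does not need the restriction $i(M)>2$ under which you cite Theorem \ref{MainTh}(3). The chain $\Omega^D=X^D+Y^D+E$, $\Gamma^D=(\Omega^D)^2\Gamma$, $M^D=S(\Gamma^D)^2R$ never uses it. Alternatively, check the three Drazin equations for $\bmx 0&B(CB)^D\\(CB)^DC&0\emx$ directly from $M^2=\diag(BC,CB)$.
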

 
 Note that \cite[Theorem 2.1]{CatralLAA} is a special case of (2) of the previous Theorem. Indeed, if $B=\bmx X & U\emx, C=\bmx Y\\V\emx$ with $\rank(UV)=1$, and $X,Y$ are invertible, then $UV= uv^*$, for some vectors $u,v$,  and $BC= XY+uv^*= X(I+X\io uv^*Y\io)Y$. The latter is invertible if and only if $I+X\io uv^*Y\io$ is invertible, which in turn is equivalent to $1+v^* (XY)\io u\ne 0$, using Sherman--Morrison--Woodbury formula, or by applying Theorem \ref{jacobson}.

 \iffalse
 \
 
 In \cite[Definition 2.1]{Mcdonald}, (real positive weighted)  linked stars  digraphs were considered. The adjacency matrix (up to permutation similarity) is of the form $\bmx A & B\\C & 0\emx$, with $B=\bmx \x_1^T & 0  & \cdots & 0\\ 0 & \x_2^T & & \vdots \\
 \vdots  & &\ddots& 0\\
 0 &\cdots &\cdots&  \x_n^T\emx$, $C = \bmx \y_1 & 0  & \cdots & 0\\ 0 & \y_2 & & \vdots \\
 \vdots  & &\ddots& 0\\
 0 &\cdots &\cdots&  \y_n\emx   $ and  strictly positive vectors $\x_i,\y_i \in \R^n$. Obviously $BC = \diag(\x_1^T y_1, \dots, \x_n^Ty_n)$ is a nonsingular matrix, both $B$ and $C$ are not invertible, and therefore $M$ is group invertible by   Theorem \ref{th3-2}.  A related case are double star digraphs, defined in  \cite[Definition 3.1]{Mcdonald}, whose (weighted) adjacency matrix is permutation similar to $\bmx A & B\\C & 0\emx$ with $A=\bmx 0 & a\\ b & 0\emx$, $B=\diag (\x^T, \z^T)$, $C=\diag( \y, \w)$, and $\x, \z \in \R^m$, $\y, \w \in \R^n$ strictly nonzero vectors. As in  \cite[Theorem 3.3]{Mcdonald}, assuming $\x^T \y\ne 0$ and $\z^T \w \ne 0$ means $BC$ is invertible, and  therefore $M$ is group invertible by   Theorem \ref{th3-2}.

 \fi
 
 \

\

\noindent \bf{Acknowledgement.} This research was partially financed by Portuguese Funds through FCT (Funda\c c\~ao para a Ci\^encia e a Tecnologia) within the Project UID/00013/2025. \\ \href{https://doi.org/10.54499/UID/00013/2025}{https://doi.org/10.54499/UID/00013/2025}

%We would like to sincerely thank all referees for their valuable comments and constructive suggestions, which have helped improve the quality and clarity of the manuscript.

%We would especially like to acknowledge one referee for its  thorough and constructive comments and suggestions. His/her thoughtful remarks and corrections were invaluable in strengthening   the scientific content of the original manuscript.

\end{document}